\documentclass[11pt]{amsart}
\usepackage[hypertex]{hyperref}
\usepackage{amsmath,amsthm,amssymb, amscd, amsfonts}
\usepackage[all]{xy}
\usepackage{leftidx}
\usepackage{bbold}
\usepackage[latin1]{inputenc}
\usepackage{enumerate}

\theoremstyle{plain}

\newtheorem{theorem}{Theorem}[section]
\newtheorem{corollary}[theorem]{Corollary}

\newtheorem{proposition}[theorem]{Proposition}
\newtheorem{lemma}[theorem]{Lemma}

\theoremstyle{definition}
\newtheorem{definition}[theorem]{Definition}

\theoremstyle{remark}
\newtheorem{remark}[theorem]{Remark}

\numberwithin{equation}{section}\theoremstyle{plain}

\newcommand{\uno}{ \mathbb{1}}
\newcommand{\otb}{{\overline{\otimes}}}

\newcommand{\vphi}{\varphi}
\renewcommand{\1}{\textbf{1}}

\newcommand{\lt}{\prec}
\newcommand{\rt}{\succ}

\newcommand{\C}{{\mathcal C}}
\newcommand{\D}{{\mathcal D}}

\newcommand{\Z}{{\mathcal Z}}
\newcommand{\Zz}{{\mathbb Z}}
\newcommand{\M}{\mathcal{M}}

\newcommand{\toto}{\longrightarrow}

\newcommand{\E}{{\mathcal E}}

\newcommand{\Rep}{\operatorname{Rep}}
\newcommand{\Supp}{\operatorname{Supp}}

\newcommand{\KER}{\mathfrak{Ker}}

\newcommand\corep{\operatorname{comod}\!\mbox{-}}

\newcommand\Aut{\operatorname{Aut}}
\newcommand\Irr{\operatorname{Irr}}
\newcommand\FPdim{\operatorname{FPdim}}

\newcommand\vect{\operatorname{Vec}}

\newcommand\id{\operatorname{id}}

\newcommand\End{\operatorname{End}}

\newcommand\op{\operatorname{op}}
\newcommand\Hom{\operatorname{Hom}}

\begin{document}
\title[Crossed actions of matched pairs]{Crossed actions of matched pairs of
groups on tensor categories}
\author{Sonia Natale}
\address{Facultad de Matem\'atica, Astronom\'\i a y  F\'\i sica.
Universidad Nacional de C\'ordoba. CIEM -- CONICET. Ciudad
Universitaria. (5000) C\'ordoba, Argentina}
\email{natale@famaf.unc.edu.ar
\newline \indent \emph{URL:}\/ http://www.famaf.unc.edu.ar/$\sim$natale}

\thanks{Partially supported by  CONICET, SeCYT--UNC and Alexander von Humboldt
Foundation}

\keywords{tensor category; exact sequence; matched pair; crossed action;
braided tensor category; crossed braiding}

\subjclass[2010]{18D10; 16T05}

\date{\today}

\begin{abstract} We introduce the notion of $(G, \Gamma)$-crossed action on a
tensor category, where $(G, \Gamma)$ is a matched pair of finite groups. A
tensor category is called a $(G, \Gamma)$-crossed tensor category if it is
endowed with a  $(G, \Gamma)$-crossed action.  We show that every  $(G,
\Gamma)$-crossed tensor category $\C$ gives rise to a tensor
category $\C^{(G, \Gamma)}$ that fits into an exact sequence of tensor
categories $\Rep G \toto \C^{(G, \Gamma)} \toto \C$. We also define the notion
of a $(G, \Gamma)$-braiding in a $(G, \Gamma)$-crossed tensor category, which
is connected with certain set-theoretical solutions of the QYBE. This extends
the notion
of $G$-crossed braided tensor category due to Turaev.  We show that if $\C$ is a
$(G, \Gamma)$-crossed tensor category equipped with a $(G, \Gamma)$-braiding,
then the tensor category $\C^{(G, \Gamma)}$ is a braided tensor category in a
canonical way.
\end{abstract}

\maketitle

\section{Introduction}

Besides from their inherent algebraic appeal, monoidal and tensor categories are
relevant structures in many areas of mathematics and mathematical physics.  The
endeavour around the far-reaching problem of their classification has seen a
considerable outgrowth in the last decades. Widespread examples of tensor
categories are provided by Hopf algebras and its generalizations by means of its
representation theory.

The main goal of this paper is to present a construction of a class of tensor
categories that generalizes and puts into a unified perspective certain renowned
classes of examples.  

The input for this construction consists of a matched pair of finite groups $(G.
\Gamma)$ plus a tensor category $\C$ endowed with a $\Gamma$-grading and an
action of $G$ by  autoequivalences (which are not necessarily tensor functors):
$$\C = \bigoplus_{s \in \Gamma} \C_s, \qquad \rho: \underline{G}^{\op} \to
\underline{\Aut}(\C),$$
that are related to each other in an appropriate sense. For reasons that might
well become apparent in the sequel, we call such a data a \emph{$(G,
\Gamma)$-crossed action} on $\C$. We say that $\C$ is a \emph{$(G,
\Gamma)$-crossed tensor category}, if it is endowed with a $(G, \Gamma)$-crossed
action. See Definition \ref{crossed-action}.

\medbreak Recall that a \emph{matched pair of groups} is a collection $(G,
\Gamma)$, where $G$ and $\Gamma$
are groups  endowed with  mutual  actions by permutations
$$\Gamma \overset{\vartriangleleft}\longleftarrow \Gamma \times G
\overset{\vartriangleright}\longrightarrow G$$ satisfying the following
conditions:
\begin{equation}\label{matched}
s \vartriangleright gh  = (s \vartriangleright g) ((s
\vartriangleleft g) \vartriangleright h), \quad st
\vartriangleleft g  = (s \vartriangleleft (t \vartriangleright g))
(t \vartriangleleft g), \end{equation} for all $s, t \in \Gamma$,
$g, h \in G$. 

\medbreak The requirements in our definition of a $(G, \Gamma)$-crossed tensor
category are that, for all $g \in G$, $s\in \Gamma$, 
$$\rho^g(\C_s) = \C_{s \lhd g},$$  
and the existence of natural isomorphisms $$\gamma^g_{X, Y}: \rho^g(X \otimes Y)
\to
\rho^{s\rhd g}(X) \otimes \rho^g(Y), \qquad  X \in \C, \,  Y \in
\C_s,$$ subject to certain rather natural compatibility conditions.

\medbreak From a $(G, \Gamma)$-crossed tensor category $\C$ we produce a new
tensor category that we denote $\C^{(G, \Gamma)}$. The tensor product in
$\C^{(G, \Gamma)}$ is built from the tensor product of $\C$ and the natural isomorphisms $\gamma$. This is done in Theorem
\ref{tens-prod}. 

\medbreak The main tool in the proof of Theorem \ref{tens-prod} is the notion of a
\emph{Hopf monad}, introduced in \cite{bv}, \cite{blv}. This notion and some of
its main features are recalled in Subsection \ref{hmnds}. It turns out that the
data underlying a $(G, \Gamma)$-crossed tensor category $\C$ give rise to a
monad $T$ on $\C$ in such a way that the category $\C^T$ of $T$-modules in $\C$
identifies with $\C^{(G, \Gamma)}$.  We show that, with respect to a suitable
comonoidal structure arising from the $(G, \Gamma)$-crossed action on $\C$, $T$
is in fact a Hopf monad,  which allows to conclude that $\C^{(G, \Gamma)}$ is a
tensor category.

We have that $\C^{(G, \Gamma)}$ is a
finite tensor category if and only if the neutral homogeneous component $\D =
\C_e$ of the associated $\Gamma$-grading is a finite tensor category. On the
other side, 
$\C^{(G, \Gamma)}$ is a fusion category if and only if  $\D$ is a fusion
category and the characteristic of $k$ does not divide the order of $G$
(Proposition \ref{finite-fusion}).

\medbreak We show  that, like in the case of an equivariantization under a group
action by tensor autoequivalences, the category $\C^{(G, \Gamma)}$ fits into an
exact sequence $$\Rep G \toto \C^{(G, \Gamma)} \toto \C,$$ in the sense of the
definition given in \cite{tensor-exact}. See Theorem \ref{exact-sequence}.
However, this is not an equivariantization exact sequence, unless the action 
$\rhd: \Gamma \times G \longrightarrow G$ is (essentially) trivial. Dually, the
category $\C^{(G, \Gamma)}$ is not a $\Gamma$-graded tensor category, unless the
action $\lhd: \Gamma \times G \longrightarrow \Gamma$ is (essentially) trivial.
See Propositions \ref{equiv-triv} and \ref{gr-triv}.

\medbreak Let $G$ be a group. Motivated by his developements in Homotopy
 Quantum Field Theory, Turaev introduced the notion of $G$-crossed
braided categories \cite{turaev}, which serve as a tool in the construction of
invariants of 3-dimensional $G$-manifolds. 
M\" uger showed in \cite{mueger-crossed} (see also \cite{kirillov}) that 
$G$-crossed braided categories arise
from the so-called Galois extensions of braided tensor categories.  

As it turns out, the $G$-crossed categories underlying $G$-crossed braided
categories of Turaev yield examples of crossed actions of a matched pair.
Indeed, the right adjoint action $\rhd: G \times G \longrightarrow G$ and the
trivial action $\lhd: G \times G \longrightarrow G$ make $(G, G)$ into a 
matched pair of groups. The conditions in Definition \ref{crossed-action} of a
$(G, G)$-crossed action on a tensor category $\C$ boil down in this case to the
conditions defining a $G$-crossed tensor category $\C$.

\medbreak Let $\C$ be a $(G, \Gamma)$-crossed tensor category. We define in this
paper a \emph{$(G, \Gamma)$-braiding} in $\C$ as a triple $(c, \vphi, \psi)$,
where  $\vphi, \psi:\Gamma \to G$ are group homomorphisms and  $c$ is a
collection of natural isomorphisms 
$$c_{X, Y}: X \otimes Y \to \rho^{t^{-1} \rhd
\vphi(s^{-1})}(Y) \otimes \rho^{\psi(t)}(X), \qquad X \in \C_s, \, Y \in \C_t,$$
satisfying certain compatibility conditions. See Definition
\ref{crossed-braiding}. 

\medbreak Recall that a set-theoretical solution of the Quantum Yang-Baxter
Equation is an invertible map $r: X \times X \to X \times X$, where $X$ is a
set, satisfying the condition $r^{12}r^{13}r^{23} = r^{23}r^{13}r^{12}$, as maps
$X \times X \times X \to X \times X \times X$. A theory of set-theoretical
solutions of the QYBE was developed in \cite{ess}, \cite{lyz2}, \cite{s}.
Our definition of a $(G, \Gamma)$-braiding is related to the set-theoretical
solutions of the  QYBE
equation on the set $\Gamma$ studied in \cite{lyz2},
corresponding to appropriate actions of the group $\Gamma$ on
itself. We discuss this relation in Subsection \ref{set-YBE}.

\medbreak We show that  a $(G, \Gamma)$-braiding in $\C$ gives
rise to a braiding in $\C^{(G, \Gamma)}$, thus providing examples of braided
tensor categories.   See Theorem \ref{crossed-braided}.

In the case where $\C$ is a $G$-graded tensor category, regarded as before as
$(G, G)$-crossed tensor category, a $G$-braiding $c$ in $\C$ is the same thing
as a $(G, G)$-braiding $(c, \vphi, \psi)$, where $\psi = \id_G: G \to G$ is the
identity group homomorphism and $\vphi$ is the trivial group homomorphism
(Proposition \ref{data-gb}).  

\medbreak Matched pairs of groups are the main ingredients in the origin of one
of the first classes of examples of non-commutative and non-cocommutative Hopf
algebras discovered by G. I. Kac in the late 60's \cite{kac} (see also
\cite{majid}, \cite{masuoka}, \cite{takeuchi}). These Hopf algebras are most
commonly called \emph{abelian bicrossed products} or \emph{abelian extensions};
they are characterized by the attribute of fitting into an exact sequence of
Hopf algebras
\begin{equation}\label{h-abext}k \toto k^\Gamma \toto H \toto k G \toto
k,\end{equation} 
where $G$ and $\Gamma$ are finite groups which, \textit{a fortiori}, form a
matched pair $(G, \Gamma)$. This class of Hopf algebras, as well as its
generalizations in different contexts, has been intensively studied in the
literature. 

\medbreak We show that the representation category of an abelian extension of
finite dimensional Hopf algebras fits into our construction. More precisely, we
use the cohomological data determining an abelian exact sequence as in
\eqref{h-abext} to provide the tensor category $\C(\Gamma)$ of finite
dimensional $\Gamma$-graded vector spaces with a $(G, \Gamma)$-crossed action,
such that the outcoming tensor category $\C^{(G, \Gamma)}$ is strictly
equivalent to the tensor category of finite dimensional representations of $H$.

\medbreak 
Along this paper $k$ will be an algebraically closed field. Our discussion
focuses on the framework of tensor categories over $k$. Several pertinent
definitions and facts about tensor categories are recalled in Subsection
\ref{tc}.  We refer the reader to \cite{BK}, \cite{egno}, for a detailed
treatment of the subject.

We stress that most notions and results in this paper  (c.f. Sections
\ref{main-const}, \ref{the-cat} and \ref{braidings}) can be formulated as well
for more general (not necessarily finite) matched pairs of  groups $(G, \Gamma)$
and
monoidal categories $\C$, under milder assumptions. The finiteness restriction
on
the pair $(G, \Gamma)$ are imposed by the intrinsic finiteness of tensor
categories.

\medbreak The contents of the paper are organized as follows. In Section
\ref{prels} we overview the distinct concepts and basic facts on the main
structures entering into the picture: matched pairs of groups,  tensor
categories and their module categories,  Hopf monads on tensor categories and
their relation with the notion of exact sequences of tensor categories.
In Section \ref{act-gdg} we discuss the main ingredients in our construction,
namely, group actions on $k$-linear abelian categories and the
related equivariantization process on one side, and group gradings on tensor
categories on the other side. In Section \ref{main-const} we define crossed
actions of matched pairs on tensor categories. In Section \ref{the-cat} we
present the main construction of the paper, that is, we prove here that every
crossed action gives rise to a tensor category. The main general properties of
this tensor category are studied in Section \ref{ppties}. In Section
\ref{braidings} we introduce $(G, \Gamma)$-crossed braidings and prove that a
$(G, \Gamma)$-crossed tensor category equipped with a $(G, \Gamma)$-crossed
braiding gives rise to a braided tensor category. In Section \ref{ejemplos} we
give examples of the main constructions from $G$-crossed categories and abelian
extensions of Hopf algebras.

\subsection*{Acknowledgement} This paper was partly written during a 
research stay in the University of Hamburg. The author thanks the Humboldt
Foundation, C. Schweigert and the Mathematics Department of U. Hamburg for the
kind hospitality.

\section{Preliminaries}\label{prels}

\subsection{Matched pairs of groups}\label{mpair} A matched pair of groups is
characterized by the existence of a group
$H$  endowed
with an \emph{exact factorization} into subgroups isomorphic to $G$ and
$\Gamma$, respectively. 
That is, $H$ is a group containing subgroups $\tilde G\cong G$ and $\tilde
\Gamma
\cong \Gamma$, such that 
$$H = \tilde G \, \tilde \Gamma, \qquad \tilde \Gamma \cap \tilde G = \{ e\}.$$

In fact, if $(G, \Gamma)$ is a matched pair, then there is a group structure,
denoted $G \Join \Gamma$ in the cartesian product $G \times \Gamma$, defined by
$$(g, s) (h, t) = (g(s \rhd h), (s\lhd h) t),$$ for all $g, h \in G$, $s, t\in
\Gamma$.
Conversely, given such a group $H$, we may identify $G$ and $\Gamma$ with
subgroups of $H$. In this way the
actions $\lhd:\Gamma \times G \to \Gamma$ and $\rhd:\Gamma \times G \to G$ are
determined by the relations $$sg = (s \vartriangleright
g)(s \vartriangleleft g),$$ for all $g \in G$, $s \in \Gamma$.

\medbreak  Let $(G, \Gamma)$ be a matched pair of groups.  Relations
\eqref{matched} imply that $s \vartriangleright e =
e$ and $e \vartriangleleft g = e$, for all $s \in \Gamma$, $g \in    G$.

\medbreak Using relations
\eqref{matched} it is also not difficult to show that the following conditions
are
equivalent:
  \begin{enumerate}
   \item[(i)] The action $\vartriangleleft:  \Gamma
\times G \longrightarrow \Gamma$ is trivial.
   \item[(ii)]   The action $\vartriangleright : \Gamma \times G
\longrightarrow G$ is by group automorphisms.
  \end{enumerate}
If these conditions hold, then the group $G \Join \Gamma$ coincides with the
semidirect
product $G \rtimes \Gamma$.

Similarly, the conditions
  \begin{enumerate}
   \item[(i')] The action $\vartriangleright: \Gamma \times G
\longrightarrow G$ is trivial.
   \item[(ii')]   The action $\vartriangleleft: 
\Gamma \times G \longrightarrow \Gamma$ is by group automorphisms.
  \end{enumerate}
are equivalent and, if they hold, then the group $G \Join \Gamma$ coincides with
the
semidirect product $G \ltimes \Gamma$.

\subsection{Tensor categories}\label{tc} A \emph{tensor category} over $k$ is a
$k$-linear
abelian rigid mo\-noidal category $\C$ such that the tensor product $\otimes: \C
\times \C \to \C$ is $k$-bilinear and the following conditions are satisfied:
\begin{itemize}
\item $\C$ is \emph{locally finite}, that is,
every object of $\C$ has finite length and Hom spaces are finite dimensional. 
\item The unit object $\uno \in \C$ is simple.
\end{itemize}
 Note that since $k$ is algebraically closed, then an object $X$ of $\C$ is
simple if and only if it is scalar, that
is, if and only if $\End(X) \cong k$. 

If $\C$ is a tensor category over $k$, then the functor $\otimes: \C \times \C
\to \C$ is exact in both variables.

A \emph{tensor subcategory} of a tensor category $\C$ is a full 
subcategory $\D$ of $\C$ which is closed under the operations of taking tensor
products, subobjects and dual objects (so in particular $\uno \in \D$).    A
tensor subcategory is
itself a tensor category with tensor product inherited from that of $\C$.

\medbreak A \emph{finite tensor category} over $k$ is a tensor category $\C$
over $k$ which satisfies either of the following equivalent conditions:
\begin{itemize}
\item $\C$ has enough projective objects and finitely many simple objects.
\item $\C$ has a projective generator, that is, an object $P\in \C$ such that
the functor $\Hom_\C(P, -)$ is faithful exact.
\item $\C$ is equivalent as a $k$-linear category to the category of finite
dimensional representations of a finite dimensional $k$-algebra.
\end{itemize}

A \emph{fusion category} over $k$ is a semisimple finite tensor category over
$k$. 

Let $G$ be a finite group. The category of finite dimensional
representations of $G$ over $k$ will be denoted by $\Rep
G$. This is a finite tensor category over $k$; it is a fusion category if and
only if the characteristic of $k$ does not divide the order of $G$.

All tensor categories in this paper will be assumed to be strict.

\medbreak Let $\C, \D$ be tensor categories over $k$. A  $k$-linear exact strong
monoidal
functor $F: \C \to \D$  will be called a \emph{tensor
functor}. Such functor is automatically faithful.

\medbreak A \emph{braided tensor category} over $k$ is a tensor category $\C$
endowed with a \emph{braiding}, that is, a natural isomorphism $\sigma: \otimes
\to \otimes^{\operatorname{op}}$ satisfying the following \emph{hexagon
conditions}: 
$$\sigma_{X, Y\otimes Z} = (\id_Y\otimes \sigma_{X, Z}) \, (\sigma_{X, Y}
\otimes \id_Z), \quad 
\sigma_{X\otimes Y, Z} = (\sigma_{X, Z} \otimes \id_Y) \, (\id_X\otimes
\sigma_{Y, Z}) ,$$
for all $X, Y, Z \in \C$.

\medbreak A (left) \emph{module category} over a tensor
category $\C$ is a locally finite $k$-linear abelian category $\M$ endowed with
a bifunctor $\otb: \C \times \M \to \M$, which is $k$-bilinear and exact, and
satisfies 
natural associativity and unit conditions. 

A module category $\M$ is called
\emph{indecomposable} if it is not equivalent to a direct sum of
two nonzero module categories.
It is called
\emph{exact} if for every projective object $P \in \C$ and for every object $M
\in \M$,  $P \otb M$ is a projective object of $\M$. See \cite{EO}.

It follows from \cite[Proposition 2.1]{EO} that every tensor category $\C$ is an
exact indecomposable module category over any tensor subcategory $\D$ with
respect to the action $\D \times \C \to \C$ given by the tensor product of $\C$.

As a consequence of this fact, we obtain that a finite tensor category  $\C$ is
a fusion
category if and only if its unit object $\uno$ is  projective.

\subsection{Hopf monads on tensor categories}\label{hmnds} 

Let  $\C$ be a tensor category over $k$. Recall that a \emph{monad} on $\C$ is
an endofunctor $T$ of $\C$ endowed with natural transformations $\mu: T^2 \to T$
and $\eta:\id_\C \to T$ called, respectively, the multiplication and unit of $T$
such that 
\begin{equation}
\mu_XT(\mu_X) = \mu_X\mu_{T(X)}, \quad \mu_X\eta_{T(X)} = \id_{T(X)} =
\mu_XT(\eta_X),
\end{equation}
for all objects $X \in \C$. 

\medbreak The monad $T$ is a \emph{bimonad} if it is a comonoidal endofunctor of
$\C$ such that the product $\mu$ and the unit $\eta$ are comonoidal
transformations. That is, if the comonoidal structure of $T$ is given by natural
transformations
$$T_2(X,Y): T(X \otimes Y) \to T(X) \otimes T(Y), $$  $X, Y \in \C$ and $T_0:
T(\uno) \to \uno$, then, for all objects $X, Y \in \C$, we have 
\begin{align}
\label{t2-mu}&T_2(X,Y)\mu_{X\otimes Y} = (\mu_X \otimes \mu_Y)
T_2(T(X),T(Y))T(T_2(X,Y)),\\
\label{t2-eta}& T_0 \mu_\uno = T_0T(T_0), \quad T_2(X,Y) \eta_{X\otimes
Y}=\eta_X
\otimes \eta_Y,\quad T_0 \eta_\uno =\id_\uno.
\end{align} 

\medbreak A bimonad $T$ is called a \emph{Hopf monad} provided that the fusion
operators $H^l:
T(\id_\C \otimes T) \to T\otimes T$ and $H^r: T(T \otimes \id_\C) \to T\otimes
T$ defined, for every $X, Y \in \C$, by
\begin{align*} & H^l_{X, Y}: =  (\id_{T(X)} \otimes \mu_Y) \, T_2(X, T(Y)):
T(X\otimes T(Y)) \to T(X) \otimes T(Y),\\
& H^r_{X, Y}: = (\mu_X \otimes \id_{T(Y)}) \, T_2(T(X), Y): T(T(X) \otimes Y)
\to T(X) \otimes T(Y), \end{align*}
are isomorphisms. 

\medbreak Let $T$ be a  $k$-linear right exact Hopf monad on $\C$. Then the
category $\C^T$ of $T$-modules in $\C$ is a tensor category over $k$. 

Recall that the objects of $\C^T$ are pairs $(X, r)$, where $X$ is an object of
$\C$ and $r:T(X) \to X$ is a morphism
in $\C$, such that
\begin{equation*}
r T(r)= r \mu_X, \quad \quad r \eta_X= \id_X.
\end{equation*}
If $(X,r), (X',r') \in \C^T$, a morphism $f: (X,r)\to  (X',r')$ is a morphism
$f:X \to X'$ in $\C$ such that $f r = r' T(f)$.

The tensor product of two objects $(X,r), (X',r') \in \C^T$ is defined by 
\begin{equation}\label{tp-ct}(X,r)\otimes (X', r') = (X\otimes X', (r\otimes
r')T_2(X,X')),
\end{equation} and the  unit  object of $\C^T$ is  $(\uno, T_0)$. See \cite{bv},
\cite{blv}, \cite[Proposition 2.3]{tensor-exact}.

\medbreak Moreover, in this situation, the forgetful functor $F: \C^T \to \C$, 
$F(X, r) = X$, is a strict tensor functor. The functor $F$ is dominant if and
only if
the Hopf monad $T$ is faithful.

\medbreak A \emph{quasitriangular Hopf monad} on a tensor category $\C$ is a
Hopf monad $T$ equipped with an \emph{$R$-matrix} $R$, that is, $R$ is a
$*$-invertible natural transformation $$R_{X, Y}: X \otimes Y \to T(Y) \otimes
T(X), \quad X, Y \in \C,$$
satisfying the following conditions, for all objects $X, Y, Z \in \C$:
\begin{align}\label{r1}& (\mu_X \otimes \mu_Y) R_{TX, TY} T_2(X, Y) = (\mu_X
\otimes \mu_Y) T_2(TY, TX) T(R_{X, Y}),\\
\label{r2}&(\id_{TZ} \otimes T_2(X, Y)) R_{X\otimes Y, Z} = (\mu_Z \otimes
\id_{TX\otimes TY}) (R_{X, TZ} \otimes \id_{TY}) (\id_X\otimes R_{Y, Z}),\\
\label{r3}& (T_2(Y ,Z) \otimes \id_{TX}) R_{X, Y\otimes Z} = (\id_{TY\otimes TZ}
\otimes \mu_X) (\id_{TY} \otimes R_{TX, Z}) (R_{X, Y} \otimes \id_Z).
\end{align} The $*$-invertibility of $R$ means that the natural morphisms
$$R^{\#}_{(X, r), (Y, s)} = (s \otimes r) R_{X, Y}: X \otimes Y \to Y \otimes
X,$$ are isomorphisms, for all objects $(X, r), (Y, s) \in \C^T$.
See \cite[Subsection 8.2]{bv}.

In view of \cite[Theorem 8.5]{bv}, if $T$ is a quasitriangular Hopf monad on
$\C$, then $\C^T$ is a braided tensor category with braiding $\sigma_{(X, r),
(Y, s)} : (X, r) \otimes (Y, s) \to (Y, s) \otimes (X, r)$, defined in the form
$\sigma_{(X, r), (Y, s)} = (s \otimes r) R_{X, Y}$.

\subsection{Exact sequences of tensor categories}

Let $\C$, $\C''$ be tensor
categories over $k$. A tensor functor $F: \C \to \C''$ is called \emph{normal}
if every object $X$ of $\C$, there
exists a subobject $X_0 \subset X$ such that $F(X_0)$ is the
largest trivial subobject of $F(X)$.

If the functor $F$ a right adjoint $R$, then $F$ is normal if and only if
$R(\uno)$ is a trivial object of $\C$ \cite[Proposition 3.5]{tensor-exact}.

For a tensor functor $F: \C \to \C''$, let $\KER_F$ denote the tensor
subcategory
$F^{-1}(\langle \uno \rangle) \subseteq \C$  of objects $X$ of
$\C$ such that $F(X)$ is a trivial object of $\C''$.

\medbreak Let $\C', \C, \C''$ be tensor categories over $k$. An \emph{exact
sequence of tensor categories} is a sequence of
tensor functors
\begin{equation}\label{exacta-fusion}\xymatrix{\C' \ar[r]^f & \C \ar[r]^F &
\C''}
\end{equation}
such that the tensor functor $F$ is dominant and normal and the tensor functor
$f$ is a full embedding whose  essential image  is $\KER_F$.
See \cite{tensor-exact}. 

\medbreak 
The \emph{induced Hopf algebra} $H$ of the exact sequence
\eqref{exacta-fusion} is defined as
the coend of the fiber functor $\omega_F = \Hom_{\C''}(\uno, Ff): \C' \to
\vect_k$.   There is
an equivalence of tensor categories $\C' \simeq \corep H$.    See
\cite[Subsection 3.3]{tensor-exact}.

\medbreak By
\cite[Theorem 5.8]{tensor-exact} exact sequences \eqref{exacta-fusion} with
finite dimensional induced Hopf algebra $H$ are classified by normal faithful
right exact $k$-linear Hopf
monads $T$ on $\C''$, such that the Hopf monad of the restriction of $T$
to the trivial subcategory of $\C''$ is isomorphic to $H$.
Recall that a $k$-linear right exact Hopf monad $T$ on a tensor
category $\C''$ is called \emph{normal} if $T(\uno)$ is a trivial object of
$\C''$. 

\section{Group actions and group gradings on $k$-linear and tensor
categories}\label{act-gdg}
In this section we discuss some facts on group actions and group gradings on
$k$-linear and tensor categories that will be used later on.

\subsection{Group actions on $k$-linear abelian categories} Let $G$
be a group and let $\C$ be a $k$-linear abelian category. 

Let $\underline G$ be the strict monoidal category whose objects are the
elements of $G$ and morphisms are identities, with tensor product defined as the
multiplication in $G$ and unit object $e \in G$. 
Let also $\underline \Aut \, \C$ be the strict monoidal category whose objects
are $k$-linear autoequivalences of $\C$, morphisms are natural transformations,
with tensor product defined by composition of endofunctors and natural
transformations and unit object $\id_\C$.

\medbreak Consider the strict monoidal category $\underline{G}^{\op}$ obtained
from $\underline G$ by reversing the tensor product. That is, the underlying
category of $\underline{G}^{\op}$ is $\underline G$, while the tensor product in
$\underline{G}^{\op}$ is defined by $g \otimes h = hg$, $g, h \in G$.

\medbreak By a \emph{right action} of $G$ on $\C$  by $k$-linear
autoequivalences we shall understand a monoidal functor $\rho:
\underline{G}^{\op} \to \underline \Aut \, \C$. 
That is, for every $g \in G$, we have a
$k$-linear functor $\rho^g: \C \to \C$ and natural isomorphisms
$$\rho^{g,h}_2 : \rho^g \rho^h \to \rho^{hg}, \quad g, h \in G,$$ and
$\rho_0 : \id_\C \to \rho^e$, satisfying
\begin{align}\label{rro-2} & (\rho^{ba, c}_2)_X \, (\rho^{a,b}_2)_{\rho^c(X)} =
(\rho^{a, cb}_2)_X \, \rho^a((\rho^{b,c}_2)_X), \\ \label{rro-3}& (\rho^{a,
e}_2)_X \rho^{a}({\rho_0}_X) = \id_{\rho^a(X)} = (\rho^{e, a}_2)_X
(\rho_0)_{\rho^{a}(X)},
\end{align}
for all $X \in \C$, $a, b, c \in G$.

\subsection{Equivariantization}\label{equiv-ab}

Let $\rho: \underline G^{\op} \to \underline \Aut \, \C$ be a right action of
$G$ on $\C$ by $k$-linear autoequivalences. A
$G$-equivariant object is a pair $(X, r)$, where $X$ is an object
of $\C$ and $r = (r^g)_{g \in G}$ is a collection of isomorphisms $r^g:\rho^gX
\to X$, $g \in G$, satisfying
\begin{equation}\label{deltau} r^g \rho^g(r^h) = r^{hg} (\rho^{g,
h}_2)_X, \quad \forall g, h \in G, \quad \quad
r^e{\rho_0}_X=\id_X.\end{equation} A $G$-equivariant morphism $f: (X, r) \to
(Y, r')$ is a morphism $f: X \to Y$ in $\C$ such
that $fr^g = {r'}^g\rho^g(f)$, for all $g \in G$.

The category of $G$-equivariant objects and morphisms is a $k$-linear abelian
category, denoted $\C^G$, called the  \emph{equivariantization} of $\C$ under
the action $\rho$.

\medbreak Suppose that $G$ is a finite group. Let $T^\rho: \C \to \C$ be the
endofunctor of $\C$ defined by $T^\rho =
\bigoplus_{g \in G}\rho^g$. Then $T^\rho$ is a $k$-linear exact monad
on $\C$ with multiplication $\mu:{T^\rho}^2 = \bigoplus_{g, h \in G}\rho^g
\rho^h \to \bigoplus_{g \in G}\rho^g = T^\rho$, given componentwise by the
isomorphisms $\rho_2^{g, h}: \rho^g \rho^h \to \rho^{hg}$, and unit $\eta =
\rho_0:\id_\C \to \rho^e \to T^\rho$.

Since the unit $\eta$ of $T^\rho$ is a monomorphism, then $T^\rho$ is a faithful
endofunctor of $\C$ \cite[Lemma 2.1]{tensor-exact}.

Extending the terminology of \cite{tensor-exact}, we shall call $T^\rho$ the
monad of the group action $\rho$. (Note however, that the group actions
considered \textit{loc. cit.} are by tensor autoequivalences on tensor
categories.)

The canonical isomorphisms $$\Hom_\C(\bigoplus_{g \in G}\rho^g(X), X) \cong
\prod_{g \in G} \Hom_\C(\rho^g(X), X),$$ $X \in \C$, induce an equivalence of
categories over $\C$ between the category $\C^{T^\rho}$ of $T^\rho$-modules in
$\C$ and the equivariantization $\C^G$.
See \cite[Subsection 5.3]{tensor-exact}.

\begin{remark} Suppose that $\C$ is a tensor category. Assume in addition that
the action of
$G$ is given by tensor autoequivalences of $\C$, that is, the endofunctor
$\rho^g$ is a tensor functor, for all $g \in G$, and $\rho_2^{g, h}:\rho^g\rho^h
\to \rho^{hg}$, $\rho_0: \id_\C \to \rho^e$ are natural isomorphisms of monoidal
functors. 

Then $T^\rho$ is Hopf monad on $\C$ with comonoidal
structure $$T_2(X, Y): \bigoplus_{g \in G}\rho^g(X \otimes Y) \to \bigoplus_{g,
g' \in G}\rho^g(X) \otimes \rho^g(Y),$$ and $T_0: \bigoplus_{g \in
G}\rho^g(\uno)
\to \uno$, given componentwise by the monoidal structure $\rho_2^g: \rho^g \circ
\otimes \to \rho^g \otimes \rho^g$ and $\rho^g_0:\rho^g(\uno) \to \uno$ of the
functors $\rho^g$, $g \in G$. 

Thus the equivariantization $\C^G$ is a tensor category with tensor product
defined by the formula
\begin{equation}\label{tens-equiv}(X, r) \otimes (X', r') = (X \otimes X', (r
\otimes r')(\rho_2)_{X, X'}). \end{equation} 
In addition, if $\C$ is a finite tensor category, then so is $\C^G$. 
If $\C$ is a fusion category and the characteristic of $k$ does not
divide the order of $G$, then $\C^G$ is also a fusion category.

Furthermore, $T^\rho$ is a normal cocommutative Hopf monad on $\C$ and 
the forgetful functor $F:\C^G \to \C$ gives rise to a central exact sequence of
tensor categories
\begin{equation}\Rep G \toto \C^G \toto \C. \end{equation}
See \cite[Corollary 2.22]{tensor-exact}, \cite[Example 2.5]{indp-exact}.
\end{remark}

\subsection{Group gradings on tensor categories}

Let $G$ be a group and let $\C$ be a tensor category over $k$. Let 
\begin{equation}\label{grading}
 \C = \bigoplus_{g \in G}\C_g,
\end{equation}
be a \emph{$G$-grading} on
$\C$. That is, for every $g \in G$, $\C_g$ is a full subcategory of $\C$ and the
following conditions hold:

\begin{itemize}
 \item For every object $X$ of $\C$ we
have a decomposition $X \cong \bigoplus_{g \in G}X_g$, where $X_g \in \C_g$, for
all $g \in G$.
\item For all $X \in \C_g$, $Y \in \C_h$, $g \neq h \in G$, we have $\Hom_\C(X,
Y) = 0$.
\item $\C_g \otimes \C_h \subseteq
\C_{gh}$, for all $g, h \in G$. 
\end{itemize}

The subcategories $\C_g$, $g \in G$, are called the \emph{homogeneous
components} of the grading. A $G$-grading \eqref{grading} is called
\emph{faithful} if $\C_g \neq 0$, for
all $g \in G$.

\medbreak Let $\C = \bigoplus_{g \in G}\C_g$ and $\D= \bigoplus_{g \in G}\D_g$
be
$G$-graded $k$-linear abelian categories. 
A functor $F:\C \to \D$ will be called a \emph{$G$-graded functor} if
$F(\C_g) \subseteq \D_g$, for all $g \in G$.

\begin{lemma}\label{f-gr} Let $F:\C \to \D$ be a $G$-graded functor between
$G$-graded $k$-linear abelian categories categories $\C$, $\D$. Suppose $F$ is
dominant. Then, for all $g \in G$, $F$ induces by restriction a dominant functor
$F: \C_g \to \D_g$. \end{lemma}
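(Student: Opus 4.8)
The plan is to use the definition of a dominant functor in terms of subquotients, together with the hypothesis that $F$ respects the gradings, and the fact that the grading is a direct sum decomposition compatible with $\Hom$ vanishing across distinct components. Recall that a tensor functor (or more generally an additive functor between $k$-linear abelian categories) is \emph{dominant} if every object of the target is a subquotient of an object in the essential image of $F$; equivalently in this abelian setting, for every $Y \in \D$ there is $X \in \C$ with a nonzero morphism $F(X) \to Y$, and iterating/combining this gives that $Y$ embeds into (or is a quotient of) some $F(X)$. First I would fix $g \in G$ and an object $Y \in \D_g$, and view $Y$ as an object of $\D$; by dominance of $F:\C\to\D$ there is an object $X \in \C$ such that $Y$ is a subquotient of $F(X)$.

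Next I would decompose $X \cong \bigoplus_{h \in G} X_h$ according to the $G$-grading on $\C$, so that $F(X) \cong \bigoplus_{h \in G} F(X_h)$ with $F(X_h) \in \D_h$ since $F$ is a $G$-graded functor. Now $Y$ lies in the homogeneous component $\D_g$, and a subquotient of $\bigoplus_{h} F(X_h)$ that is homogeneous of degree $g$ must already be a subquotient of the single summand $F(X_g)$: indeed, any subobject or quotient of $\bigoplus_h F(X_h)$ decomposes along the grading (because $\Hom_\D(\D_h,\D_{h'})=0$ for $h \neq h'$ forces every subobject and every quotient to be graded), and its degree-$g$ part is a subquotient of $F(X_g)$ while its parts in other degrees vanish if the whole object has degree $g$. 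Hence $Y$ is a subquotient of $F(X_g)$ with $X_g \in \C_g$, which is exactly the statement that the restricted functor $F:\C_g \to \D_g$ is dominant.

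The only genuinely delicate point — and the one I would write out carefully — is the claim that subobjects and quotient objects of a graded object are themselves graded, i.e.\ that the ambient decomposition $F(X)\cong\bigoplus_h F(X_h)$ restricts to a compatible decomposition of any subquotient $Y$. This follows from the orthogonality condition $\Hom_\D(A,B)=0$ for $A\in\D_h$, $B\in\D_{h'}$ with $h\neq h'$: a monomorphism $Y \hookrightarrow \bigoplus_h F(X_h)$ composed with the projections $\pi_h$ and the assumption $Y \in \D_g$ shows $\pi_h|_Y = 0$ for $h \neq g$, so $Y$ factors through $F(X_g)$; dually for quotients one uses the inclusions $F(X_h)\hookrightarrow\bigoplus_h F(X_h)$. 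I should also note at the start that dominance can be phrased purely in terms of subquotients of objects in the image (this is the standard reformulation used in the sources cited in the excerpt), so that no extra structure beyond the abelian and graded structure is needed; the argument does not use the tensor structure at all, consistent with the lemma being stated for $k$-linear abelian categories.

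Thus the proof is short: decompose $X$, push the decomposition through $F$ using that $F$ is graded, observe that a degree-$g$ subquotient sees only the degree-$g$ summand, and conclude. I expect no serious obstacle; the main care is in justifying the "subquotients of graded objects are graded" step cleanly from the $\Hom$-orthogonality axiom of a grading.
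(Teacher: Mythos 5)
Your argument is correct and is essentially the paper's own proof: fix $Y\in\D_g$, use dominance to realize $Y$ inside $F(X)$, decompose $X\cong\bigoplus_h X_h$, push the decomposition through the graded functor $F$, and use the $\Hom$-orthogonality of the grading to conclude that $Y$ sits inside $F(X_g)$. The only cosmetic difference is that the paper works directly with subobjects (its notion of dominance) rather than subquotients, which slightly shortens the "graded subquotient" discussion you flag as the delicate point.
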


\begin{proof} Let $g \in G$ and let $Y$ be any object of $\D_g$. Since $F$ is
dominant, there exists $X \in \C$ such that $Y$ is a subobject of $F(X)$. Let $X
\cong \bigoplus_{h \in G}X_h$ be decomposition of $X$ into a direct sum of
homogeneous objects $X_h \in  \C_h$. Since $F$ is a $G$-graded functor, $F(X)
\cong \bigoplus_{h \in G}F(X_h)$ is a decomposition of $F(X)$ into a direct sum
of homogeneous objects $F(X_h) \in  \D_h$. Then $Y$ must be a subobject of
$F(X_g)$, because $\Hom_\D(Y, F(X_h)) = 0$, for all $h \neq g$. This proves the
lemma.
\end{proof}

\begin{lemma}\label{uno-dual} Let $\C$ be a $G$-graded tensor category.   Then
we have:
\begin{enumerate}
 \item[(i)] The unit object of $\C$ belongs to $\C_e$.
 \item[(ii)]     Suppose $Y \in \C_g$
is a nonzero homogeneous object. Then $Y^*$ and $^*Y$ belong to $\C_{g^{-1}}$.
\end{enumerate}
\end{lemma}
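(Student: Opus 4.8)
The plan is to prove both statements of Lemma~\ref{uno-dual} by exploiting the basic structural features of a $G$-grading, namely that homogeneous components are closed under tensor products (with the grading being multiplicative) and that $\Hom$ spaces between distinct homogeneous components vanish.

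For part (i), I would argue as follows. Write the unit object $\uno$ according to its homogeneous decomposition, $\uno \cong \bigoplus_{g \in G}\uno_g$ with $\uno_g \in \C_g$. The grading condition $\C_g \otimes \C_h \subseteq \C_{gh}$ together with $\uno \otimes \uno \cong \uno$ shows that for each pair $g, h$ the component $\uno_g \otimes \uno_h$ lies in $\C_{gh}$ and is a direct summand of $\uno$; comparing with the decomposition of $\uno$ forces $\uno_g \otimes \uno_h = 0$ whenever $g h$ is distinct from all indices appearing. More cleanly, since $\End_\C(\uno) \cong k$ is one-dimensional, $\uno$ is simple, so its homogeneous decomposition has a single nonzero term: $\uno \cong \uno_{g_0}$ for a unique $g_0 \in G$. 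Tensoring $\uno_{g_0} \otimes \uno_{g_0} \cong \uno_{g_0}$ and using multiplicativity of the grading gives $\uno_{g_0} \in \C_{g_0^2}$, hence $g_0^2 = g_0$ by faithfulness of the decomposition on the simple object $\uno$ (or directly: $\uno_{g_0}$ cannot be simultaneously a nonzero object of $\C_{g_0}$ and of $\C_{g_0^2}$ unless these coincide), so $g_0 = e$.

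For part (ii), let $Y \in \C_g$ be a nonzero homogeneous object with right dual $Y^*$. Decompose $Y^* \cong \bigoplus_{h \in G}(Y^*)_h$ into homogeneous components. The evaluation morphism $\mathrm{ev}_Y : Y^* \otimes Y \to \uno$ is nonzero, and by part (i) its target $\uno$ lies in $\C_e$. Since $(Y^*)_h \otimes Y \in \C_{hg}$ by multiplicativity, and $\Hom_\C$ vanishes between distinct homogeneous components, the composite $(Y^*)_h \otimes Y \hookrightarrow Y^* \otimes Y \xrightarrow{\mathrm{ev}_Y} \uno$ can be nonzero only when $hg = e$, i.e.\ $h = g^{-1}$. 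Using that $\mathrm{ev}_Y$ is nonzero (as $Y \neq 0$), at least one such summand must survive, so $(Y^*)_{g^{-1}} \neq 0$; to see that all \emph{other} components vanish, I would use the coevaluation $\mathrm{coev}_Y : \uno \to Y \otimes Y^*$ symmetrically, noting $Y \otimes (Y^*)_h \in \C_{gh}$, which is nonzero only when $gh = e$, forcing every nonzero component of $Y^*$ to sit in degree $g^{-1}$. Hence $Y^* \in \C_{g^{-1}}$. The argument for the left dual ${}^*Y$ is identical, using $\mathrm{ev}'_{Y}: Y \otimes {}^*Y \to \uno$ and $\mathrm{coev}'_Y: \uno \to {}^*Y \otimes Y$.

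I expect no serious obstacle here; this is a routine consequence of the grading axioms and rigidity. The only point requiring a little care is to combine the evaluation and coevaluation (or, equivalently, the triangle identities) so as to pin down \emph{all} homogeneous components of the dual rather than just exhibit one nonzero one — the triangle identity $(\mathrm{id}_Y \otimes \mathrm{ev}_Y)(\mathrm{coev}_Y \otimes \mathrm{id}_Y) = \mathrm{id}_Y$ immediately rules out spurious components since a component in degree $h \neq g^{-1}$ would contribute a nonzero piece to a $\Hom$ space between distinct homogeneous degrees after tensoring. Alternatively one can simply invoke that $Y \mapsto Y^*$ is an equivalence $\C_g \to (\C_{g^{-1}})^{\op}$, but proving the degree-shift is exactly what is being asked, so I would keep the elementary evaluation/coevaluation argument.
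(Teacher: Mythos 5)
Your proof is correct in substance, but for part (ii) it takes a more hands-on route than the paper. The paper disposes of (ii) in one line via the duality adjunction: for $h \neq g^{-1}$ and $X \in \C_h$ it uses $\Hom_\C(X, Y^*) \cong \Hom_\C(X \otimes Y, \uno)$ (citing \cite[Lemma 2.1.6]{BK}), which vanishes because $X \otimes Y \in \C_{hg}$ and $hg \neq e$; this kills every homogeneous component of $Y^*$ outside degree $g^{-1}$ at once. You instead unwind that adjunction into explicit evaluation/coevaluation manipulations, which is equivalent in content but longer, and it is exactly where your one imprecision sits: the triangle identity you quote, $(\id_Y \otimes \mathrm{ev}_Y)(\mathrm{coev}_Y \otimes \id_Y) = \id_Y$, is a statement about $\id_Y$ and only tells you that $\id_Y$ factors through $Y \otimes (Y^*)_{g^{-1}} \otimes Y$; it does not rule out spurious components of $Y^*$. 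The identity you actually need is the other one, $(\mathrm{ev}_Y \otimes \id_{Y^*})(\id_{Y^*} \otimes \mathrm{coev}_Y) = \id_{Y^*}$: restricted to $(Y^*)_h$, the coevaluation lands only in $Y \otimes (Y^*)_{g^{-1}}$ and the evaluation on $(Y^*)_h \otimes Y \in \C_{hg}$ vanishes for $hg \neq e$, so $\id_{(Y^*)_h} = 0$ and hence $(Y^*)_h = 0$ for $h \neq g^{-1}$. With that substitution your argument closes. Part (i) is the same in both proofs (the paper is terser, but the intended argument is exactly your simplicity-plus-$\uno\otimes\uno\cong\uno$ reasoning), and note in passing that the intermediate claim $(Y^*)_{g^{-1}} \neq 0$ is not needed for the conclusion.
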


\begin{proof} Since the unit object $\1$ is simple, then we get that $\uno \in
\D$.  This shows (i). To show (ii), let $h \neq g^{-1} \in G$ and let $X \in
\C_h$.   We have an isomorphism
$$\Hom_\C(X, Y^*) = \Hom_\C(X, \uno \otimes Y^*) \cong \Hom_\C(X \otimes
Y, \uno),$$ by \cite[Lemma 2.1.6]{BK}.    Hence $\Hom_\C(X, Y^*) = 0$
because $X \otimes Y \in \C_{hg}$ and $hg \neq e$. Similarly one can see that 
$\Hom_\C(X, ^*Y)
= 0$.   Therefore $Y^*, ^*Y \in \C_{g^{-1}}$, as claimed. This finishes the
proof of the lemma.
\end{proof}

\begin{proposition} Let $\C$ be a $G$-graded tensor category. Then the neutral
homogeneous component $\D = \C_e$ is a tensor subcategory of $\C$.
Besides, every homogeneous component $\C_g$,
$g \in G$, is an indecomposable exact left (and right) module category with the
action given by the tensor product of $\C$.
\end{proposition}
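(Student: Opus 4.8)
The plan is to treat the two assertions separately. For the first, I would verify directly that $\D=\C_e$ satisfies the defining conditions of a tensor subcategory: it is a full subcategory of $\C$ by the definition of a grading; it contains $\uno$ by Lemma~\ref{uno-dual}(i); it is closed under tensor products since $\C_e\otimes\C_e\subseteq\C_{ee}=\C_e$; and it is closed under taking duals by Lemma~\ref{uno-dual}(ii) applied with $g=e$. For closure under subobjects, given $X\in\C_e$ and a monomorphism $\iota\colon X'\hookrightarrow X$, write $X'\cong\bigoplus_{h\in G}X'_h$ with $X'_h\in\C_h$; then for each $h\neq e$ the composite $X'_h\hookrightarrow X'\xrightarrow{\ \iota\ }X$ lies in $\Hom_\C(X'_h,X)=0$, and since $\iota$ is a monomorphism this forces $X'_h=0$, so $X'\in\C_e$.

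For the second assertion, I would first observe that, since $\C_e\otimes\C_h\subseteq\C_h$ and $\C_h\otimes\C_e\subseteq\C_h$ for all $h\in G$, the tensor product of $\C$ makes each $\C_h$ into a left and right $\D$-module category, and that $\C=\bigoplus_{h\in G}\C_h$ is a decomposition of $\C$ both as a left and as a right $\D$-module category, because $\Hom_\C(\C_h,\C_{h'})=0$ for $h\neq h'$. Since, by the fact recalled in Subsection~\ref{tc}, $\C$ is an exact $\D$-module category, and since exactness passes to direct summands of module categories, each $\C_g$ is an exact left and right $\D$-module category. (Alternatively this can be checked directly: a grading argument shows that an object of $\C_e$ is projective in $\C_e$ if and only if it is projective in $\C$, and then $P\otimes M$ is projective in $\C$ for every $M\in\C$ by the adjunction isomorphism $\Hom_\C(P\otimes M,-)\cong\Hom_\C(P,-\otimes M^*)$; as $P\otimes M\in\C_g$ whenever $M\in\C_g$, it is also projective in $\C_g$.)

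It remains to show that each $\C_g$ is indecomposable as a $\D$-module category. If $\C_g=0$ there is nothing to prove, so assume $\C_g\neq 0$ and, for a contradiction, suppose $\C_g=\M_1\oplus\M_2$ as left $\D$-module categories with $\M_1,\M_2$ nonzero. Choose a nonzero object $M_0\in\M_1$; by Lemma~\ref{uno-dual}(ii) its dual satisfies $M_0^*\in\C_{g^{-1}}$. For an arbitrary $N\in\C_g$, the evaluation $\mathrm{ev}_{M_0}\colon M_0^*\otimes M_0\to\uno$ is a nonzero morphism onto the simple object $\uno$, hence an epimorphism; applying the exact functor $N\otimes(-)$ to it gives an epimorphism $(N\otimes M_0^*)\otimes M_0\twoheadrightarrow N$. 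Here $N\otimes M_0^*\in\C_g\otimes\C_{g^{-1}}\subseteq\C_e=\D$, so $(N\otimes M_0^*)\otimes M_0$ lies in the $\D$-submodule $\M_1$; as $\M_1$ is a direct summand of the abelian category $\C_g$ it is closed under quotients, whence $N\in\M_1$. Since $N$ was arbitrary, $\M_2=0$, a contradiction. The right-module case is identical, using the left dual ${}^*M_0\in\C_{g^{-1}}$ and the evaluation $\mathrm{ev}'_{M_0}\colon M_0\otimes{}^*M_0\to\uno$ in place of $M_0^*$ and $\mathrm{ev}_{M_0}$.

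The only step with genuine content, and the main obstacle, is this last one: one must invoke the simplicity of $\uno$ to see that the relevant evaluation morphisms are epimorphisms, the exactness of $\otimes$ in each variable to push them forward, and the fact that the homogeneous components genuinely split $\C$ as a $\D$-module category so that both exactness and closure under quotients are inherited by each summand $\C_g$; granting these, the argument is routine.
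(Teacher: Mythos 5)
Your proof is correct and follows essentially the same route as the paper: Lemma \ref{uno-dual} for the tensor subcategory claim, inheritance of exactness from $\C$ as a $\D$-module category, and a rigidity argument linking any two nonzero objects of $\C_g$ through the $\D$-action. The only cosmetic difference is that you realize an arbitrary $N\in\C_g$ as a quotient of $(N\otimes M_0^*)\otimes M_0$ via the evaluation being an epimorphism, whereas the paper exhibits a nonzero morphism $X\to Y\otimes(Y^*\otimes X)$ via the coevaluation and faithfulness of $-\otimes X$; these are two formulations of the same idea.
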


\begin{proof} It follows from Lemma \ref{uno-dual} that $\D$ contains the unit
object and is closed under the operations of taking duals. This implies that
$\D$ is a tensor subcategory of $\C$.

Let now $g \in    G$. By the definition of a $G$-grading we have  $\D
\otimes \C_g \subseteq \C_g$, and  $\C_g \otimes \D \subseteq \C_g$.  Therefore
$\C_g$ is  both a left and right module subcategory of $\C$ over the tensor
subcategory $\D$. Since $\C$ is an exact module category over $\D$, then so is
$\C_g$.

It remains to prove the indecomposability of $\C_g$. We may assume that $\C_g
\neq 0$. Let $X, Y$ be any nonzero objects of $\C_g$.  It follows from Lemma
\ref{uno-dual}  (ii) that $Z = Y^*  \otimes  X \in \D$.   

Observe that the functor $- \otimes X: \C \to \C$ is faithful exact. Then, since
by rigidity $\Hom_\C(\uno, Y \otimes Y^*) \neq 0$,  we obtain that $\Hom_\C(X, Y
\otimes Z) = \Hom_\C(X, Y \otimes Y^* \otimes X) \neq 0$. 
This implies that $\C_g$ is indecomposable as a right module category over
$\D$. 
Indecomposability as a left module category is shown similarly. This finishes
the proof of the proposition.
\end{proof}

\begin{corollary}\label{d-finite-fusion} Suppose $G$ is a finite group. Let $\C$
be a $G$-graded tensor
category with neutral homogeneous component $\D$.  Then  $\C$ is a finite tensor
category
(respectively, a fusion category) if and only if so is $\D$.
\end{corollary}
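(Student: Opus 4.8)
The plan is to derive the statement from the preceding Proposition together with the facts recalled in Subsection \ref{tc}: that $\D=\C_e$ is a tensor subcategory of $\C$, that $\C$ is an exact indecomposable module category over $\D$ with action given by $\otimes$ (by \cite[Proposition 2.1]{EO}, applicable since $\D$ is a tensor subcategory), and that a finite tensor category is exactly a tensor category with finitely many simple objects and enough projective objects, a fusion category being characterized among finite tensor categories by the projectivity of $\uno$. The one external input needed beyond these is that \emph{an exact indecomposable module category over a finite tensor category is a finite $k$-linear abelian category} \cite{EO}; here indecomposability is essential, since, for instance, any semisimple locally finite $k$-linear abelian category with infinitely many simple objects is an exact, non-finite, module category over $\vect$ with $V\otb M=V\otimes_k M$.

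For the assertion about finite tensor categories I argue in both directions. If $\D$ is a finite tensor category, then $\C$, being an exact indecomposable module category over $\D$, is a finite $k$-linear abelian category by the quoted fact, and since $\C$ is a tensor category it is a finite tensor category. Conversely, assume $\C$ is a finite tensor category. The $G$-grading yields a decomposition $\C\cong\bigoplus_{g\in G}\C_g$ as $k$-linear abelian categories, because every object decomposes along the grading and there are no nonzero morphisms between distinct homogeneous components; hence each $\C_g$, and in particular the direct factor $\C_e=\D$, inherits finitely many simple objects and enough projective objects from $\C$ (a projective cover in $\C$ of an object of $\D$ lies in $\D$, since the complementary components map to zero). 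Thus $\D$ is a finite $k$-linear abelian category, and as $\D$ is a tensor subcategory of $\C$ by the Proposition, it is a finite tensor category. Alternatively one may run this direction component by component, using that each $\C_g$ is an indecomposable exact module category over $\D$ and that $G$ is finite.

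For the fusion case, suppose first that $\C$ is a fusion category. Then $\C$ is a finite tensor category, so $\D$ is a finite tensor category by the previous paragraph; moreover $\C$ is semisimple and $\D$ is closed under subobjects, so each object of $\D$ is a direct sum of simple objects of $\C$, each of which, being a subobject, lies in $\D$; hence $\D$ is semisimple, i.e. a fusion category. Conversely, suppose $\D$ is a fusion category. Then $\D$ is a finite tensor category, so $\C$ is a finite tensor category by the previous paragraph. Since $\D$ is semisimple, its unit object $\uno$ is projective in $\D$, and because the $\D$-module structure on $\C$ is given by $\otimes$ and is exact, $X=\uno\otimes X$ is a projective object of $\C$ for every $X\in\C$; as any locally finite abelian category all of whose objects are projective is semisimple, $\C$ is semisimple, hence a fusion category. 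The only genuinely nontrivial point in the argument is the cited finiteness of exact indecomposable module categories over finite tensor categories; granting it, everything else is bookkeeping with the grading and the module category structure.
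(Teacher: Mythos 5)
Your argument is correct and reaches the same conclusions, but on the key direction it substitutes a black-boxed module-category fact for the explicit construction the paper carries out. For ``$\D$ finite $\Rightarrow$ $\C$ finite'' you invoke that an exact indecomposable module category over a finite tensor category is a finite abelian category and apply it to $\C$ over $\D$ in one stroke; the paper instead works component by component, exhibiting $P\otimes X_0$ (with $P$ a projective generator of $\D$ and $0\neq X_0\in\C_g$) as a projective generator of $\C_g$ via the rigidity isomorphism $\Hom_\C(P\otimes X_0,X)\cong\Hom_\D(P,X\otimes X_0^*)$, and then uses finiteness of $G$. These are really the same mechanism: the paper's rigidity computation is exactly the internal-Hom argument that underlies the general fact you quote, specialized to the case where internal Homs are given by duals. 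One caution on your citation: in \cite{EO} module categories are taken to be finite abelian categories essentially by standing hypothesis, whereas here (as in Subsection \ref{tc} of this paper) one starts from a merely locally finite $\M$; so the statement you attribute to \cite{EO} is not quite quotable verbatim in the form you need, and writing out the projective-generator argument, as the paper does, is the safer course. Your handling of the remaining directions is fine and close to the paper's: for the ``only if'' direction you unwind the grading into a direct sum of abelian categories and observe that projective covers of objects of $\D$ stay in $\D$, where the paper simply quotes that tensor subcategories of finite tensor (resp.\ fusion) categories are again such; and for ``$\D$ fusion $\Rightarrow$ $\C$ fusion'' your observation that $X=\uno\otimes X$ is projective because $\uno$ is projective in the semisimple category $\D$ and the action is exact is precisely the content of the reference to \cite[Example 3.3]{EO} in the paper. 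Your counterexample remark about why indecomposability is needed is a worthwhile sanity check, and you correctly note the paper's componentwise route as an alternative.
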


\begin{proof} Any tensor subcategory of a finite tensor category (respectively,
of a fusion category) is itself a finite tensor category  (respectively, a
fusion category). Then we only need to show the 'if' direction.  

Suppose first that $\D$ is a finite tensor category. Let $P\in \D$ be a
projective generator, that is $P$ is an object of $\D$ such that the functor
$\Hom_\D(P, -)$ is faithful exact. Since the group $G$ is finite, it will be
enough to show that every homogeneous component is a finite $k$-linear abelian
category.

Let $g \in G$ such that $\C_g \neq 0$. Note that since $\C$ is a tensor
category, then $\C_g$ is locally finite  (that is, it has finite
dimensional hom spaces and every object has finite length). Therefore it will be
enough to show that $\C_g$ has a projective generator. Let $X_0 \in \C_g$ be any
nonzero object.

By exactness of the left $\D$-module category $\C_g$, $P \otimes X_0$ is a
projective object of $\C_g$. Hence the functor $\Hom_{\C_g}(P \otimes X_0, -) =
\Hom_{\C}(P \otimes X_0, X)$ is exact. In addition, using the rigidity of $\C$,
we get for all $X \in \C_g$ a natural isomorphism
$$\Hom_{\C}(P \otimes X_0, X) \cong \Hom_{\C}(P
, X \otimes X_0^*) = \Hom_\D(P, X \otimes X_0^*).$$   Since both functors
$\Hom_\D(P, -)$ and $-\otimes X_0^*$ are faithful, then 
$\Hom_{\C_g}(P \otimes X_0, -)$ is faithful. Hence $P \otimes X_0$ is a
projective generator of $\C_g$.  Thus we obtain that $\C$ is a finite tensor
category.

\medbreak Suppose next that $\D$ is a fusion category. In particular it is
a finite tensor category and hence so is $\C$, by the previous part.
Since $\C$ is an exact module category over $\D$, then $\C$ is semisimple (see
\cite[Example 3.3]{EO}).   Hence $\C$ is also a fusion category, as claimed. 
\end{proof}

\begin{remark} Suppose $k$ is of characteristic zero. Let $\C$ be a fusion
category over $k$.   
Group gradings on $\C$ were classified in \cite{ENO3}.

By \cite[Proposition
2.9]{ENO2}, $\C$ admits a faithful $G$-grading if and only if its Drinfeld
center $\Z(\C)$ contains a Tannakian subcategory $\mathcal E$ such that
$\mathcal E \cong \Rep G$ as symmetric categories and $\mathcal E$ is contained
in the kernel of the forgetful functor $U : \Z(\C) \to \C$.

Observe that endowing $\C$ with a $G$-grading is equivalent to providing a map
$\partial: \Irr(\C) \to G$ such that $\partial(Z) = \partial(X) \partial(Y)$,
for all simple objects $X$, $Y$ and $Z$ of $\C$ such that $\Hom_\C(Z, X \otimes
Y) \neq 0$. 

The grading corresponding to a Tannakian subcategory $\E \cong \Rep
G$ of the center of $\C$ is defined as follows. Let $X$ be an object of $\C$ and
let $(V, \sigma) \in \E$. Then $U(V, \sigma) = V$ is a trivial object of $\C$
and thus it is equipped with a trivial half-brading $\tau_{X, V}: X \otimes V
\to V \otimes X$. Composing with the braiding $\sigma$ this gives an isomorphism
$\tau_{X, V}\sigma_{V, X}:V \otimes X \to V \otimes X$.

Let $X$ be a simple object of $\C$. Using that $\sigma$ is a braiding, we obtain
in this way a natural automorphism of tensor functors $U\vert_{\E} \to
U\vert_\E$. This is the same as an element $g \in G$, since $U\vert_\E$ is a
fiber functor on $\E$. This defines a map $\partial :\Irr(\C) \to G$, which is
seen to be $G$-grading using the hexagon axiom for the braiding of the center.

\medbreak  It follows from \cite[Proposition 8.20]{ENO}
that if $\C$ is a fusion category endowed with a faithful $G$-grading, then
$\FPdim \C = \vert G\vert \FPdim \D$.
\end{remark}

\section{$(G, \Gamma)$-crossed actions on tensor categories}\label{main-const}

Let $\C$ be a tensor
category over $k$ and let $(G, \Gamma)$ be a matched pair of  groups. 

\begin{definition}\label{crossed-action} A $(G, \Gamma)$-\emph{crossed action}
on the tensor category $\C$ consists of the following data:

\begin{itemize}\item  A $\Gamma$-grading on $\C$: $\C = \bigoplus_{s \in \Gamma}
\C_s$.

\item  A right action of $G$ on $\C$ by $k$-linear autoequivalences $\rho:
\underline{G}^{\op} \to \underline{\Aut}(\C)$ such that
\begin{equation}\label{rho-partial} \rho^g(\C_s) = \C_{s \lhd g},\quad \forall
g\in G, \, s\in \Gamma,\end{equation}

\item A collection of natural isomorphisms $\gamma = (\gamma^g)_{g\in G}$:
\begin{equation}\label{gamma}\gamma^g_{X, Y}: \rho^g(X \otimes Y) \to
\rho^{t\rhd g}(X) \otimes \rho^g(Y), \quad X \in \C, \, t\in \Gamma,\,  Y \in
\C_t, \end{equation}

\item A collection of isomorphisms $\gamma^g_0: \rho^g(\uno) \to \uno$, $g \in
G$.
\end{itemize}

These data are subject to the commutativity of the following diagrams:

\begin{itemize}\item[(a)] For all $g \in G$, $X \in \C$, $s, t \in \Gamma$, $Y
\in \C_s$, $Z
\in \C_t$,
\begin{equation*}
\xymatrix @C=0.6in @R=0.45in{
\rho^g(X \otimes Y \otimes Z) \ar[rr]^{\gamma^g_{X\otimes Y, Z}}
\ar[d]_{\gamma^g_{X, Y \otimes Z}} & & \rho^{t \rhd g}(X \otimes Y) \otimes
\rho^g(Z) \ar[d]^{\gamma^{t\rhd g}_{X, Y} \otimes \id_{\rho^g(Z)}} \\
\rho^{st \rhd g}(X) \otimes \rho^g(Y \otimes Z)\ar[rr]_{\id_{\rho^{st \rhd
g}(X)}
\otimes \gamma^g_{Y, Z} \qquad }&& \rho^{s \rhd (t \rhd g)}(X) \otimes \rho^{t
\rhd g}(Y) \otimes \rho^g(Z)}
\end{equation*}

\item[(b)] For all $g \in G$, $X \in \C$,
\begin{equation*}\xymatrix @C=0.6in @R=0.45in{
\rho^g(X) \otimes \rho^g(\uno)  \ar[dr]_{\id_{\rho^g(X)} \otimes \gamma^g_0
\quad } & \ar[l]_{\quad \gamma^g_{X, \uno}} \rho^g(X) \ar[d]^=
\ar[r]^{\gamma^g_{\uno, X} \quad } & \rho^g(\uno) \otimes \rho^g(X)
\ar[dl]^{\gamma^g_0 \otimes \id_{\rho^g(X)}} \\
& \rho^g(X) }\end{equation*}

\item[(c)] For all $g, h \in G$, $X \in \C$, $s\in \Gamma$, $Y \in \C_s$,
\begin{equation*}\xymatrix@C=0.6in @R=0.45in{
\rho^g\rho^h(X\otimes Y) \ar[dd]_{\rho^g(\gamma^h_{X, Y})} \ar[r]^{{\rho_2}_{X
\otimes Y}^{g, h}} & \rho^{hg}(X \otimes Y) \ar[d]^{\gamma^{hg}_{X, Y}} \\
& \rho^{s \rhd hg}(X) \otimes \rho^{hg}(Y) \\
\rho^g(\rho^{s\rhd h}(X) \otimes \rho^h(Y)) \ar[r]_{\gamma^g_{\rho^{s\rhd h}(X),
\rho^h(Y)}}  & \rho^{(s\lhd h)\rhd g}\rho^{s\rhd h}(X) \otimes \rho^g\rho^h(Y)
\ar[u]_{{\rho_2}_X^{(s\lhd h) \rhd g, s \rhd h} \otimes
{\rho_2}_Y^{g, h}} }\end{equation*}

\item[(d)] For all $g, h \in G$, 
\begin{equation*}\xymatrix@C=0.6in @R=0.45in{
\rho^g\rho^h(\uno) \ar[d]_{\rho^{g}(\gamma^h_0)} \ar[r]^{(\rho_2^{g, h})_{\uno}}
& \rho^{hg}(\uno) \ar[d]^{\gamma^{hg}_0} \\
\rho^g(\uno) \ar[r]_{\gamma^{g}_0} & \uno}
\end{equation*}

\item[(e)] For all $X \in \C$, $s \in \Gamma$, $Y \in \C_s$, 
\begin{equation*}\xymatrix@C=0.6in @R=0.45in{
X\otimes Y \ar[dr]_{{\rho_0}_X \otimes {\rho_0}_Y} \ar[r]^{{\rho_0}_{X\otimes
Y}} & \rho^e(X\otimes Y) \ar[d]^{\gamma^{e}_{X, Y}} \\
& \rho^e(X)\otimes \rho^e(Y)}
\xymatrix@C=0.6in @R=0.45in{
\uno \ar[dr]_= \ar[r]^{{\rho_0}_\uno} & \rho^e(X\otimes Y) \ar[d]^{\gamma^e_0}
\\
& \uno}
\end{equation*}
\end{itemize}

We shall say that $\C$ is a \emph{$(G, \Gamma)$-crossed tensor category} if it
is endowed with a $(G, \Gamma)$-crossed action.
\end{definition}

\begin{remark} Recall that $s \rhd e = e$, for all $s \in \Gamma$. Thus
conditions (a) and (b) in the definition of a $(G, \Gamma)$-crossed tensor
category imply that $\rho^e: \C \to \C$ is a monoidal functor with monoidal
structure $\gamma^e_{X, Y}: \rho^e(X \otimes Y) \to \rho^e(X) \otimes
\rho^e(Y)$, $X, Y \in \C$, and $\gamma^e_0: \rho^e(\uno) \to \uno$.

Commutativity of the diagrams in condition (e) amounts to the requirement that
the natural isomorphism $\rho_0:\id_\C \to \rho^e$ is a monoidal isomorphism.
\end{remark}

\begin{remark}\label{suff-mp} Suppose that $G$ and $\Gamma$ are groups endowed
with mutual
actions by permutations $\Gamma \overset{\vartriangleleft}\longleftarrow \Gamma
\times G \overset{\vartriangleright}\longrightarrow G$. Let  $\C = \bigoplus_{s
\in \Gamma} \C_s$ be a $\Gamma$-graded tensor category over $k$ and let  $\rho:
\underline{G}^{\op} \to \underline{\Aut}(\C)$ be a right action of $G$ on $\C$
by $k$-linear autoequivalences satisfying \eqref{rho-partial}, and such that
there exists  a collection of natural isomorphisms $\gamma = (\gamma^g)_{g\in
G}$ as in \eqref{gamma}, satisfying condition (c). 

Assume that the $\Gamma$-grading on $\C$ is faithful and that the action $\rho:
\underline{G}^{\op} \to \underline{\Aut}(\C)$ is
faithful, that is, $\rho^g \cong \rho^e$ if and only if $g = e$. Then the
actions $\lhd$,
$\rhd$ make $(G, \Gamma)$ into a matched pair of groups.

Note that the faithfulness of $\rho$ holds for instance if 
the action  $\lhd: \Gamma \times G \longrightarrow \Gamma$ is faithful: in fact,
if $g \in G$ is such that $\rho^g \cong \rho^e$ then, since the $\Gamma$-grading
is faithful, we get from \eqref{rho-partial} that $s \lhd g = s$, for all $s \in
\Gamma$. Hence $g = e$.

\begin{proof} Let $s, t \in \Gamma$, $g \in G$. For all objects $X \in \C_t$, $Y
\in \C_s$, we have $X \otimes Y \in \C_{ts}$. Then, by \eqref{rho-partial},
$\rho^{g}(X \otimes Y) \in \C_{ts\lhd g}$. On the other hand, under the
isomorphism $\gamma^g$, $$\rho^{g}(X \otimes Y) \cong \rho^{s\rhd g}(X) \otimes
\rho^{g}(Y) \in \C_{t\lhd (s\rhd g)} \otimes \C_{s \lhd g} \subseteq \C_{(t\lhd
(s\rhd g))(s \lhd g)}.$$
Since, by assumption, the $\Gamma$-grading on $\C$ is faithful, we may take $X$
and $Y$ to be nonzero objects. Hence we obtain $$ts\lhd g = (t\lhd (s\rhd g))(s
\lhd g), \quad \textrm{ for all } s, t \in \Gamma, \quad g \in G.$$

Let now $g, h \in G$, $s \in \Gamma$, and let $X, Y$ be objects of
$\C$ such that $Y \in \C_s$ and $Y \neq 0$. The right hand side of (c)
defines a natural isomorphism 
$\rho^g\rho^h(X\otimes Y) \to \rho^{s \rhd hg}(X) \otimes \rho^{hg}(Y)$. On the
other hand, the left hand side of (c) defines a natural isomorphism
$$\rho^g\rho^h(X\otimes Y) \to \rho^{(s \rhd h)((s\lhd h) \rhd g)}(X) \otimes
\rho^{hg}(Y).$$
Since the tensor product of $\C$ is a faithful functor in each variable, we get
a natural isomorphism $\rho^{s \rhd hg} \cong \rho^{(s \rhd h)((s\lhd h) \rhd
g)}$. Because of faithfulness of the action $\rho$, we obtain
$$s \rhd hg = (s \rhd h)((s\lhd h) \rhd g),$$ for all $s\in \Gamma$, $g, h \in
G$.
Therefore $(G, \Gamma)$ is matched pair of groups, as claimed.
\end{proof}
\end{remark}

\section{The category $\C^{(G, \Gamma)}$}\label{the-cat}
Let $\C$ be a $(G, \Gamma)$-crossed tensor category. Since the group $G$
acts on $\C$ by $k$-linear autoequivalences, we may consider the
equivariantization $\C^G$, which is a $k$-linear abelian category. 

\medbreak Let $(X, r)$ be an equivariant object. That is, $r^g:\rho^g(X) \to X$
are isomorphisms, for all
$g \in G$, satisfying the relations \eqref{deltau}. Let $X \cong \bigoplus_{s
\in \Gamma}X_s$ be a decomposition of $X$ as a direct sum of homogeneous objects
$X_s \in \C_s$, $s \in \Gamma$. 

Condition \eqref{rho-partial} implies that, for
all $g \in G$, $s\in \Gamma$, $r^g$ induces by restriction an isomorphism
$r^g_s:\rho^g(X_s) \to X_{s \lhd g}$.

\begin{theorem}\label{tens-prod} Let $\C$ be a $(G, \Gamma)$-crossed tensor
category. Then the equivariantization of $\C$ under the action $\rho$ is a
tensor category over $k$ with tensor product defined as follows:
\begin{equation}\label{f-tensor}(X, r) \otimes (Y, r') = (X \otimes Y, \tilde
r),
\end{equation} and unit object $(\uno, (\rho^g_{\uno})_{g \in G})$,
where, for all $g \in G$, ${\tilde r}^g$ is defined as the composition
$$\bigoplus_{s \in \Gamma} \rho^g(X \otimes Y_s)
\overset{\oplus_s\gamma^g_{X, Y_s}}\toto \bigoplus_{s \in \Gamma} \rho^{s \rhd
g}(X) \otimes \rho^g(Y_s) \overset{\oplus_s r^{s \rhd g} \otimes {r'}^g_s}\toto
\bigoplus_{s \in \Gamma} X \otimes Y_{s\lhd g} = X \otimes Y,$$
for $Y = \oplus_{s \in \Gamma}Y_s$, $Y_s \in \C_s$.
\end{theorem}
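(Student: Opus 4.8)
The plan is to realize the tensor product \eqref{f-tensor} as coming from a Hopf monad structure on the monad $T^{\rho} = \bigoplus_{g\in G}\rho^{g}$ of the group action, exactly as suggested in the introduction. Recall from Subsection \ref{equiv-ab} that $T^{\rho}$ is a $k$-linear exact (hence right exact) faithful monad on $\C$, with multiplication given componentwise by $\rho_{2}^{g,h}$ and unit $\rho_{0}$, and that there is an equivalence over $\C$ between $\C^{T^{\rho}}$ and the equivariantization $\C^{G}$, induced by the canonical isomorphisms $\Hom_{\C}(\bigoplus_{g}\rho^{g}(X),X)\cong\prod_{g}\Hom_{\C}(\rho^{g}(X),X)$. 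So it suffices to endow $T^{\rho}$ with the structure of a Hopf monad in such a way that the tensor product \eqref{tp-ct} on $\C^{T^{\rho}}$ transports, under this equivalence, to the formula \eqref{f-tensor}.

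First I would write down the candidate comonoidal structure on $T^{\rho}$. For $X,Y\in\C$, decompose $Y\cong\bigoplus_{s\in\Gamma}Y_{s}$ with $Y_{s}\in\C_{s}$, and define
$$T_{2}(X,Y)\colon \bigoplus_{g\in G}\rho^{g}(X\otimes Y)\ \cong\ \bigoplus_{g\in G}\bigoplus_{s\in\Gamma}\rho^{g}(X\otimes Y_{s})\ \xrightarrow{\ \oplus_{g,s}\gamma^{g}_{X,Y_{s}}\ }\ \bigoplus_{g\in G}\bigoplus_{s\in\Gamma}\rho^{s\rhd g}(X)\otimes\rho^{g}(Y_{s}),$$
landing in the $(s\rhd g,g)$-component of $\bigoplus_{a,b\in G}\rho^{a}(X)\otimes\rho^{b}(Y)$, together with $T_{0}=\oplus_{g}\gamma^{g}_{0}\colon\bigoplus_{g}\rho^{g}(\uno)\to\uno$. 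Naturality of $T_{2}$ in both variables is inherited from naturality of the $\gamma^{g}$ and the fact that morphisms in $\C$ preserve the $\Gamma$-grading. The bulk of the verification then consists of the three bimonad axioms \eqref{t2-mu}--\eqref{t2-eta}: axiom \eqref{t2-mu} is precisely the commutativity of diagram (c) in Definition \ref{crossed-action}, read componentwise after expanding $\mu$ into the $\rho_{2}^{g,h}$ (here one uses the matched-pair identity $s\rhd hg=(s\rhd h)((s\lhd h)\rhd g)$ to see that the indices on both sides match); the coassociativity-type identity hidden in the compatibility of $T_{2}$ with itself, needed for the monoidal-unit axiom, is diagram (a); the counit axioms relating $T_{2}$, $T_{0}$ and $\eta=\rho_{0}$ are diagrams (b), (d) and (e). So this step is essentially a bookkeeping translation of the five coherence diagrams into the bimonad language, modulo carefully tracking how each homogeneous summand is permuted by the $\Gamma\times G$-actions.

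Next I would check that $T^{\rho}$ is in fact a \emph{Hopf} monad, i.e.\ that the fusion operators $H^{l}$ and $H^{r}$ are isomorphisms. On the summand indexed by $g\in G$ and $s\in\Gamma$, $H^{l}_{X,Y}$ is, up to the identifications above, $(\id\otimes\mu_{Y_{s}})\gamma^{g}_{X,T^{\rho}(Y_{s})}$, which is built from the isomorphism $\gamma^{g}_{X,-}$ and the isomorphisms $\rho_{2}^{\,\cdot\,,\cdot}$ furnishing $\mu$; since each $\gamma^{g}$ and each $\rho_{2}^{g,h}$ is an isomorphism, $H^{l}$ is an isomorphism, and symmetrically for $H^{r}$. (In fact one may give an explicit inverse using $(\gamma^{g})^{-1}$ and $(\rho_{2})^{-1}$.) With $T^{\rho}$ a $k$-linear right exact Hopf monad, the general theory recalled in Subsection \ref{hmnds} — specifically that $\C^{T}$ is a tensor category over $k$ with tensor product \eqref{tp-ct} and unit $(\uno,T_{0})$ — gives that $\C^{T^{\rho}}$, hence $\C^{G}$, is a tensor category.

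Finally, transport the structure across the equivalence $\C^{G}\simeq\C^{T^{\rho}}$. An equivariant object $(X,(r^{g})_{g})$ corresponds to the $T^{\rho}$-module $(X,r)$ with $r=\oplus_{g}r^{g}\colon\bigoplus_{g}\rho^{g}(X)\to X$; unwinding \eqref{tp-ct} with the $T_{2}$ defined above, the action on $(X,r)\otimes(Y,r')=(X\otimes Y,(r\otimes r')T_{2}(X,Y))$ is, component by component in $g$ and $s$, the composite $\rho^{g}(X\otimes Y_{s})\xrightarrow{\gamma^{g}_{X,Y_{s}}}\rho^{s\rhd g}(X)\otimes\rho^{g}(Y_{s})\xrightarrow{r^{s\rhd g}\otimes (r')^{g}_{s}}X\otimes Y_{s\lhd g}$, which is exactly the formula for $\tilde r^{\,g}$ in the statement; and the unit $(\uno,T_{0})$ corresponds to $(\uno,(\rho^{g}_{\uno})_{g\in G})$ since $T_{0}=\oplus_{g}\gamma^{g}_{0}$ and, by diagram (d), the $\gamma^{g}_{0}$ assemble into the required equivariant structure (one checks the cocycle condition in \eqref{deltau} directly from (d)). This establishes the theorem. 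The main obstacle is the second step: matching up the $\Gamma$- and $G$-index decorations in diagram (c) with axiom \eqref{t2-mu}, where one must invoke the matched-pair relations \eqref{matched} to confirm that the summands on the two sides of the bimonad axiom are indexed compatibly and that no coherence is lost in passing between $\rho^{g}(X\otimes Y)$ and its homogeneous pieces.
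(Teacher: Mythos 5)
Your overall strategy is exactly the paper's: equip the monad $T=\bigoplus_{g\in G}\rho^g$ with the comonoidal structure $T_2$, $T_0$ induced by the $\gamma^g$ and $\gamma^g_0$, translate conditions (a)--(e) of Definition \ref{crossed-action} into the bimonad axioms, show the fusion operators are invertible so that $T$ is a Hopf monad, and then identify the resulting tensor product \eqref{tp-ct} on $\C^T$ with the formula \eqref{f-tensor} on $\C^G$. The bimonad bookkeeping and the final transport of structure are fine.

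The gap is in the Hopf-monad step. You assert that $H^l$ and $H^r$ are isomorphisms ``since each $\gamma^g$ and each $\rho_2^{g,h}$ is an isomorphism,'' but that is not sufficient. For $Y\in\C_s$ homogeneous, $H^r_{X,Y}$ carries the $(g,h)$-summand of $\bigoplus_{g,h}\rho^g(\rho^h(X)\otimes Y)$ isomorphically onto the $(h(s\rhd g),g)$-summand of $\bigoplus_{g,h}\rho^g(X)\otimes\rho^h(Y)$, and $H^l_{X,Y}$ carries the $(g,h)$-summand onto the $((s\lhd h)\rhd g,\,hg)$-summand. A map between direct sums that is an isomorphism on each component is a global isomorphism only if the induced map on the index set $G\times G$ is a bijection; your parenthetical inverse built from $(\gamma^g)^{-1}$ and $(\rho_2)^{-1}$ inverts each component but not the permutation of components. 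For $H^r$ the map $(g,h)\mapsto(h(s\rhd g),g)$ is visibly bijective, but for $H^l$ the bijectivity of $(g,h)\mapsto((s\lhd h)\rhd g,\,hg)$ is not obvious and is exactly where the matched-pair relations \eqref{matched} are used in an essential way: the paper composes with the bijection $\id_G\times(s\rhd -)$ to obtain $(g,h)\mapsto((s\lhd h)\rhd g,\,s\rhd hg)$ and then writes down the explicit inverse $(u,v)\mapsto\bigl((s\lhd(s^{-1}\rhd vu^{-1}))^{-1}\rhd u,\ s^{-1}\rhd vu^{-1}\bigr)$. Without some such argument your verification that $T$ is a Hopf monad, and hence the whole theorem, is incomplete.
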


Observe that the action of $G$ on $\C$ is not necessarily by tensor
autoequivalences. Therefore the equivariantization $\C^G$ is not a tensor
category with the tensor product defined by formula \eqref{tens-equiv}. The
tensor category in
Theorem  \ref{tens-prod} will be indicated by $\C^{(G, \Gamma)}$ to emphasize
this distinction.

\begin{proof}
Consider the endofunctor $T = \bigoplus_{g \in G}\rho^g$ of $\C$ defined by
the action of $G$. Then $T$ is a $k$-linear exact faithful endofunctor of $\C$.
Moreover, $T$ is a monad on $\C$ with multiplication $\mu: T^2 \to T$ and unit
$\eta: \id_\C \to T$ induced, respectively, by the morphisms
$\rho_2^{g, h}$, $g, h \in G$, and $\rho^e$. See Subsection  \ref{equiv-ab}.

The natural isomorphisms
$$\gamma^g_{X, Y}: \rho^g(X \otimes Y) \to \rho^{s\rhd g}(X) \otimes \rho^g(Y),
\quad g \in G, \, X \in \C, Y \in \C_s,$$
induce canonically a natural transformation $$T_2(X, Y)\!: T(X \otimes Y) =
\bigoplus_{g \in G}\rho^g(X \otimes Y)  \to \bigoplus_{g, h \in G} \rho^g(X)
\otimes \rho^h (Y) = T(X) \otimes T(Y),$$ $X,Y \in \C$.
Similarly, the morphisms $\gamma^g_0: \rho^g(\uno) \to \uno$ induce a morphism
$$T_0
: T(\uno) = \bigoplus_{g \in G}\rho^g(\uno) \to \uno.$$
Conditions (a) and (b) in Definition \ref{crossed-action}
imply that $T$ is a comonoidal endofunctor of $\C$ with comonoidal structure
given by $T_2$ and $T_0$.
Conditions
(c), (d)  and (e) imply that $\mu: T^2 \to T$ and $\eta: \id_\C
\to T$ are comonoidal transformations, that is, they satisfy the relations
\eqref{t2-mu} and \eqref{t2-eta}. Hence $T$ is a bimonad on $\C$.

\medbreak  We claim that $T$ is a Hopf monad on $\C$.   This will entail that
$\C^{(G, \Gamma)}$ is a tensor category with the prescribed structure since, by
the definition of the tensor product of $\C^{(G, \Gamma)}$ given in
\eqref{f-tensor}, it coincides with the one given by formula \eqref{tp-ct} for
the tensor product in the category $\C^T$ of $T$-modules in $\C$.

\medbreak According to the results in \cite[Section 2]{blv}, to establish the
claim it will be enough to show that the fusion operators $H^l$ and $H^r$ of $T$
are isomorphisms. Recall that $H^l$ and $H^r$ are defined, respectively, by
\begin{align*}& H^l_{X, Y} = (\id_{T(X)} \otimes \mu_Y) \, T_2(X, T(Y)): T(X
\otimes T(Y)) \to T(X) \otimes T(Y), 
\\ & H^r_{X, Y} = (\mu_X \otimes \id_{T(Y)}) \, T_2(T(X), Y) : T(T(X) \otimes Y)
\to T(X) \otimes T(Y).\end{align*}

\medbreak Let $X$ be any object of $\C$. For every homogenous object $Y \in
\C_s$, $s \in \Gamma$, the operator $$H^r_{X, Y}: 
\bigoplus_{g, h \in G} \rho^g (\rho^h(X) \otimes Y) \to \bigoplus_{g, h \in
G}\rho^g(X) \otimes \rho^h(Y),$$ is given componentwise by
the composition of isomorphisms
$$(\rho_2^{s\rhd g, h}\otimes \id_{\rho^g(Y)}) \, \gamma^g_{\rho^h(X), Y}:
\rho^g(\rho^h(X) \otimes Y) \to \rho^{h (s\rhd g)}(X) \otimes \rho^g(Y).$$
Since the map $G \times G \to G \times G$, $(g, h) \mapsto (h(s\rhd g), g)$, is
bijective for any $s \in \Gamma$, then $H^r_{X, Y}$ is an isomorphism. Therefore
$H^r_{X, Y}$ is an isomorphism for all $Y \in \C$.

\medbreak Similarly, if $X$ is any object of $\C$ and $Y \in \C_s$ is a
homogeneous object, $s \in \Gamma$, then $$H^l_{X, Y}: 
\bigoplus_{g, h \in G} \rho^g (X \otimes \rho^h(Y)) \to \bigoplus_{g, h \in
G}\rho^g(X) \otimes \rho^h(Y),$$ is given componentwise by
the composition of isomorphisms
$$(\id_{\rho^{(s \lhd h) \rhd g}(X)} \otimes \rho_2^{g, h}) \, \gamma^g_{X,
\rho^h(Y)}: \rho^g(X \otimes \rho^h(Y)) \to \rho^{(s \lhd h) \rhd g}(X) \otimes
\rho^{hg}(Y).$$
We conclude as before that $H^l_{X, Y}$ is an isomorphism for all $Y \in \C$.
Indeed, to see that for each $s \in \Gamma$ the map $G \times G \to G \times G$,
$(g, h) \mapsto ((s\lhd h)\rhd g, hg)$, is bijective, we argue as follows: the
composition of this map with the bijection $\id_G \times (s \rhd -): G \times G
\to G \times G$ gives the map
$(g, h) \mapsto ((s\lhd h)\rhd g, s \rhd hg)$. Using the compatibility condition
in \eqref{matched}, the last map is bijective with inverse $(u, v)\mapsto ((s
\lhd (s^{-1}\rhd vu^{-1}))^{-1} \rhd u, s^{-1} \rhd vu^{-1})$. Therefore $T$ is
a Hopf monad, and thus $\C^{(G, \Gamma)} = \C^T$ is a tensor category, as
claimed. \end{proof}

\section{Main properties}\label{ppties}

In this section we study the structure of the tensor category 
$\C^{(G, \Gamma)}$ arising from a $(G, \Gamma)$-crossed tensor
category for a general matched pair of finite groups $(G, \Gamma)$.

\begin{theorem}\label{exact-sequence} Let $\C$ be a $(G, \Gamma)$-crossed tensor
category and let $\C^{(G, \Gamma)}$ be the category defined by Theorem
\ref{tens-prod}. 
Then the forgetful functor $F: \C^{(G, \Gamma)} \to \C$, $F(X, r) = X$,
gives rise to a perfect exact sequence of tensor categories
\begin{equation}\label{sec} \Rep G \toto \C^{(G, \Gamma)} \overset{F}\toto \C,
\end{equation} with induced Hopf algebra $H \cong k^G$.
\end{theorem}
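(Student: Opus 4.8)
The plan is to apply the classification of exact sequences of tensor categories recalled in Subsection~2.4, namely \cite[Theorem 5.8]{tensor-exact}, which identifies such sequences with finite dimensional induced Hopf algebra with normal faithful right exact $k$-linear Hopf monads $T$ on $\C$ whose restriction to the trivial subcategory $\langle \uno\rangle$ has Hopf algebra the induced one. In the proof of Theorem~\ref{tens-prod} we already exhibited the relevant Hopf monad $T = \bigoplus_{g \in G}\rho^g$ on $\C$, together with the identification $\C^{(G,\Gamma)} = \C^T$, under which the forgetful functor $F$ is the standard forgetful tensor functor $\C^T \to \C$. Thus the bulk of the work is to verify that $T$ is \emph{normal} and \emph{faithful}, and to compute the restriction of $T$ to $\langle \uno\rangle$.

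First I would check faithfulness of $T$: this was already observed in Subsection~\ref{equiv-ab}, since the unit $\eta = \rho_0$ of $T$ is a (split) monomorphism, so $T$ is faithful by \cite[Lemma 2.1]{tensor-exact}; consequently $F$ is dominant by the statement recalled in Subsection~\ref{hmnds}. Next, normality of $T$ means that $T(\uno) = \bigoplus_{g\in G}\rho^g(\uno)$ is a trivial object of $\C$; this follows immediately from the isomorphisms $\gamma^g_0 : \rho^g(\uno) \to \uno$, which give $T(\uno) \cong \uno^{(|G|)} = \uno \otimes k^G$, a trivial object. Hence $F$ is normal as well (alternatively via \cite[Proposition 3.5]{tensor-exact} applied to the right adjoint of $F$). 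So \eqref{sec} is an exact sequence of tensor categories in the sense of Subsection~2.4, and it only remains to identify $\KER_F$ and the induced Hopf algebra.

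The remaining point, which I expect to be the only genuinely computational step, is to pin down the restriction of $T$ to the trivial subcategory $\langle\uno\rangle \simeq \vect_k$ and show its associated Hopf algebra is $k^G$. On $\langle \uno\rangle$ each autoequivalence $\rho^g$ restricts, via $\gamma^g_0$ and naturality, to the identity functor (up to the canonical isomorphism), so $T|_{\langle\uno\rangle} \cong \bigoplus_{g\in G}\id = (-)\otimes k^G$ as a functor, and one checks that under this identification the comonoidal structure $T_2, T_0$ coming from the $\gamma^g_{X,Y}$ and $\gamma^g_0$ together with the multiplication $\mu$ coming from $\rho_2^{g,h}$ and the unit $\eta$ coming from $\rho_0$ correspond exactly to the Hopf algebra structure of the function algebra $k^G$ (product dual to the group multiplication of $G$, via $\rho_2^{g,h}:\rho^g\rho^h \to \rho^{hg}$). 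In particular the induced Hopf algebra is $H \cong k^G$, which is finite dimensional, so \cite[Theorem 5.8]{tensor-exact} applies and, by the discussion in Subsection~2.4, yields the equivalence $\Rep G \simeq \corep k^G \simeq \KER_F$ and the exact sequence \eqref{sec}; perfectness of the sequence follows since $T$ is faithful (equivalently, $F$ is dominant and normal with $T$ normal faithful). The main obstacle is purely bookkeeping: carefully matching the $2$-cocycle-free comonoidal data on $T|_{\langle\uno\rangle}$ induced from $(\gamma, \rho_0, \rho_2)$ with the standard bialgebra structure on $k^G$; once the normality argument via the $\gamma^g_0$ is in place this match is forced.
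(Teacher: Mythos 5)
Your proposal follows essentially the same route as the paper: identify $\C^{(G,\Gamma)}$ with $\C^T$ for the Hopf monad $T=\bigoplus_{g\in G}\rho^g$, deduce dominance of $F$ from faithfulness of $T$, deduce normality from $T(\uno)\cong\uno^{(|G|)}$ via the $\gamma^g_0$, and identify the induced Hopf algebra by restricting $T$ to the trivial subcategory, where it becomes the Hopf monad of the trivial $G$-action and hence has induced Hopf algebra $k^G$. (The paper invokes \cite[Theorem 4.8]{tensor-exact} and the argument of \cite[Theorem 5.21]{tensor-exact} rather than the classification \cite[Theorem 5.8]{tensor-exact}, but this is an inessential difference.)

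One point to fix: your justification of \emph{perfectness} is misattributed. Perfectness of the sequence is not a consequence of faithfulness of $T$; it is the requirement that $F$ admit an exact adjoint. In the paper this is obtained from the (left) exactness of $T$: the free-module functor $X\mapsto (T(X),\mu_X)$ is a left adjoint to the forgetful functor $\C^T\to\C$, and it is exact because $T$, being a direct sum of autoequivalences, is exact. This is easy to supply, but as written your parenthetical ``perfectness follows since $T$ is faithful'' is not a valid reason.
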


\begin{proof} By construction, $\C^{(G, \Gamma)} = \C^T$, where $T =
\bigoplus_{g \in G} \rho^g$ is the Hopf monad  associated to the $(G,
\Gamma)$-crossed tensor category structure on $\C$.
Moreover, the functor $F: \C^{(G, \Gamma)} \to \C$ coincides with the forgetful
functor $\C^T \to \C$.
Since $T$ is a faithful exact endofunctor of $\C$, then the functor $F: \C^{(G,
\Gamma)} \to \C$ is a dominant tensor functor \cite[Lemma 2.1 and Proposition
2.2]{tensor-exact}.
The left exactness of $T$ implies that the functor $F$ has an exact left
adjoint; then $F$ is a perfect tensor functor.

Furthermore, the isomorphisms $\gamma^g_0$, $g \in G$, induce an isomorphism
$T(\uno) =  \bigoplus_{g \in G} \rho^g(\uno) \cong \uno^G$.  Hence $T(\uno)$ is
a trivial object of $\C$, and therefore $T$ is a normal Hopf monad on $\C$. In
view of \cite[Theorem 4.8]{tensor-exact}, the functor $F$ induces an exact
sequence of tensor categories $$\corep H \toto \C^{(G, \Gamma)} \overset{F}\toto
\C,$$ where $H$ is the induced Hopf algebra of $T$, that is, $H$ is the induced
Hopf algebra of the restriction of $T$ to the trivial subcategory of $\C$
\cite[Remark 5.5]{tensor-exact}.
As in the proof of \cite[Theorem 5.21]{tensor-exact}, the restriction of $T$ to
the trivial subcategory $\langle \uno \rangle$ of $\C$ is isomorphic to the Hopf
monad of the trivial action of $G$ on $\langle \uno \rangle$ and therefore $H
\cong k^G$. 
Thus we obtain the perfect exact  sequence \eqref{sec}.   This finishes the
proof of the theorem.
\end{proof}

We shall denote $\Supp \C \subseteq \Gamma$ the support of $\C$,  that is,
$$\Supp \C = \{ s \in \Gamma\vert \, \C_s \neq 0\}.$$ Since the functor
$\otimes$
is faithful in each variable, $\Supp \C$ is a subgroup of $\Gamma$. Moreover,
relation \eqref{rho-partial} implies that $\Supp \C$ is stable under the action
$\lhd$ of $G$ on $\Gamma$.

\begin{proposition}\label{finite-fusion} Let $\C$ be a $(G, \Gamma)$-crossed
fusion category and let
$\D = \C_e$ be the neutral component of $\C$ with respect to the associated
$\Gamma$-grading. Then we have:
\begin{enumerate}\item [(i)] The category $\C^{(G,
\Gamma)}$ is
a finite tensor category if and only if $\D$ is a finite tensor category.
\item[(ii)] The category $\C^{(G, \Gamma)}$ is
a fusion category if and only if $\D$ is a
fusion category and the characteristic of $k$ does not divide the order of $G$.
If this is the case, then we have $$\FPdim \C^{(G, \Gamma)} =
|G| |\Supp \C| \FPdim \D.$$
\end{enumerate}
\end{proposition}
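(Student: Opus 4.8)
The plan is to deduce Proposition \ref{finite-fusion} from the exact sequence \eqref{sec} together with the structural results on graded tensor categories already established in Section \ref{act-gdg}. The key observation is that $\C^{(G, \Gamma)} = \C^T$ is, as an abelian category, the equivariantization $\C^G$, and the forgetful functor $F: \C^{(G, \Gamma)} \to \C$ is the restriction-of-scalars/forgetful functor of this equivariantization. Since $G$ is finite, $F$ is a faithful exact functor admitting both adjoints, and the left adjoint (the free module functor $X \mapsto (T(X), \mu_X) = (\bigoplus_g \rho^g(X), \mu)$) is also faithful exact; in particular $F$ detects both finiteness (existence of a projective generator) and semisimplicity, up to controlling what happens on $\C$ itself.

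First I would handle part (i). The ``only if'' direction: if $\C^{(G, \Gamma)}$ is a finite tensor category, then since $F$ is dominant and has an exact right adjoint (indeed is perfect by Theorem \ref{exact-sequence}), $\C$ is a finite tensor category; by Corollary \ref{d-finite-fusion} applied to the $\Gamma$-grading $\C = \bigoplus_{s} \C_s$, the neutral component $\D = \C_e$ is a finite tensor category. For the ``if'' direction, suppose $\D$ is a finite tensor category. By Corollary \ref{d-finite-fusion} (the ``if'' direction there, noting $\Supp\C$ is a finite subgroup of $\Gamma$ and $\C$ is $\Supp\C$-graded with neutral component $\D$), $\C$ is a finite tensor category. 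Now I claim $\C^{(G, \Gamma)} = \C^T$ is finite: if $P$ is a projective generator of $\C$, then $(T(P), \mu_P)$ is the image of $P$ under the free functor, which is left adjoint to the exact forgetful functor $F$, hence takes projectives to projectives; and $\Hom_{\C^T}((T(P), \mu_P), -) \cong \Hom_\C(P, F(-))$ is faithful exact because both $\Hom_\C(P,-)$ and $F$ are faithful exact. So $(T(P), \mu_P)$ is a projective generator of $\C^{(G, \Gamma)}$, which is therefore a finite tensor category.

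Next, part (ii). The characteristic condition enters exactly as in the equivariantization case: if $\mathrm{char}(k)$ divides $|G|$, then $\Rep G$ is not semisimple, and since $\Rep G$ embeds as the kernel of $F$ (a tensor subcategory of $\C^{(G, \Gamma)}$), the category $\C^{(G, \Gamma)}$ cannot be semisimple, so it is not a fusion category; and if $\C^{(G, \Gamma)}$ is fusion then $\C$ is fusion (dominant tensor functor image), whence $\D$ is fusion by Corollary \ref{d-finite-fusion}. Conversely, assume $\D$ is fusion and $\mathrm{char}(k) \nmid |G|$. By Corollary \ref{d-finite-fusion}, $\C$ is a fusion category, and by part (i), $\C^{(G,\Gamma)}$ is a finite tensor category; it remains to prove semisimplicity. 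The clean argument: $\C^{(G,\Gamma)}$ is an exact module category over $\Rep G$ (via $f: \Rep G \to \C^{(G,\Gamma)}$, using that any tensor category is exact over a tensor subcategory, \cite[Proposition 2.1]{EO}), $\Rep G$ is semisimple since $\mathrm{char}(k) \nmid |G|$, hence by \cite[Example 3.3]{EO} the category $\C^{(G,\Gamma)}$ is semisimple; combined with finiteness this makes it a fusion category. Alternatively, one argues directly that the unit $(\uno, (\rho^g_\uno)_g)$ is projective in $\C^{(G,\Gamma)}$: since $\C$ is fusion, $\uno_\C$ is projective, and one shows $(T(\uno), \mu_\uno) \cong |G| \cdot (\uno, (\rho^g_\uno)_g)$ as objects of $\C^{(G,\Gamma)}$ using $T(\uno) = \bigoplus_g \rho^g(\uno) \cong \uno^{|G|}$ and the normality $T_0$; then the unit is a direct summand of a projective, hence projective, hence $\C^{(G,\Gamma)}$ is fusion.

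Finally, the Frobenius–Perron dimension formula. Since $F: \C^{(G, \Gamma)} \to \C$ is a tensor functor whose right adjoint $R$ (composition of the identity with summing over $G$) satisfies $R(\uno) = (T(\uno), \mu) \cong \uno^{|G|}$, and $F$ fits into the exact sequence $\Rep G \to \C^{(G,\Gamma)} \to \C$ with induced Hopf algebra $k^G$ of dimension $|G|$, the multiplicativity of Frobenius–Perron dimension in exact sequences of fusion categories gives $\FPdim \C^{(G, \Gamma)} = \FPdim(\Rep G) \cdot \FPdim \C = |G| \, \FPdim \C$. On the other hand $\C$ carries a faithful $\Supp\C$-grading with neutral component $\D$, so by \cite[Proposition 8.20]{ENO} (as recalled in the Remark following Corollary \ref{d-finite-fusion}) $\FPdim \C = |\Supp \C| \, \FPdim \D$. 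Combining, $\FPdim \C^{(G, \Gamma)} = |G| \, |\Supp \C| \, \FPdim \D$. The main obstacle I anticipate is making the ``$\FPdim$ is multiplicative in exact sequences'' step precise: one needs to know that the induced Hopf algebra $k^G$ is semisimple (true since $\mathrm{char}(k) \nmid |G|$) and invoke the appropriate multiplicativity statement for perfect exact sequences of fusion categories — so I would instead prefer the more elementary route of directly computing $\FPdim$ via the adjoint $R$: for the fusion category $\C^{(G,\Gamma)}$ one has $\FPdim \C^{(G,\Gamma)} = \FPdim(R(\uno)) \cdot \FPdim \C / \dim\Hom(\uno, \uno)$-type identities, or most concretely, count simple objects and their dimensions using that $F$ is surjective on simples up to the $\Rep G$-action, to land on the stated product; this bookkeeping is the one genuinely computational point, the rest being formal consequences of Theorem \ref{exact-sequence} and Corollary \ref{d-finite-fusion}.
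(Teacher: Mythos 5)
Your argument is correct in its main line and reaches the same conclusions as the paper, but by partly different means. For the ``if'' direction of (i), where the paper simply cites \cite[Lemma 3.5]{modcat-monads} to pass from $\C$ finite to $\C^T$ finite, you prove the needed fact directly: the free module $(T(P),\mu_P)$ on a projective generator $P$ of $\C$ is a projective generator of $\C^T$, via the adjunction $\Hom_{\C^T}((T(P),\mu_P),-)\cong \Hom_\C(P,F(-))$. That is a clean, self-contained substitute. For (ii), the paper deduces semisimplicity from \cite[Corollary 4.16]{tensor-exact} applied to the exact sequence \eqref{sec}; you instead view $\C^{(G,\Gamma)}$ as an exact module category over its tensor subcategory $\Rep G$ and invoke \cite[Example 3.3]{EO} --- exactly the mechanism the paper itself uses inside Corollary \ref{d-finite-fusion} --- which works equally well. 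You also supply the ``only if'' half of the characteristic condition ($\Rep G$ sits inside $\C^{(G,\Gamma)}$ as a tensor subcategory, so semisimplicity of the latter forces $\car k\nmid |G|$), a point the paper's written proof leaves implicit; conversely, your parenthetical ``dominant tensor functor image'' for why $\C$ is fusion is silently relying on the paper's actual argument, namely that dominant tensor functors preserve projectives (\cite[Theorem 2.5]{EO}) and $F(\uno)\cong\uno$. The $\FPdim$ computation is the paper's: multiplicativity over the exact sequence (\cite[Proposition 4.10]{tensor-exact}) together with $\FPdim\C = |\Supp\C|\,\FPdim\D$ from the faithful grading; your closing worry about justifying multiplicativity is unnecessary, as the cited proposition applies to the perfect exact sequence of Theorem \ref{exact-sequence}.

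One genuine error, though it occurs only in an argument you offer as an alternative: the free module $(T(\uno),\mu_\uno)$ is \emph{not} isomorphic to $|G|$ copies of the unit object of $\C^{(G,\Gamma)}$. Its underlying object in $\C$ is indeed $\uno^{|G|}$, but its module structure makes it the image of the \emph{regular} representation under $\Rep G\hookrightarrow \C^{(G,\Gamma)}$, and $kG$ is not a direct sum of trivial representations unless $G$ is trivial. The unit is a direct summand of $(T(\uno),\mu_\uno)$ precisely when $\car k\nmid |G|$ (the splitting being given by the norm element), so the intended conclusion survives, but the isomorphism as you state it is false. Since your primary argument via exact module categories does not use this, the proof stands.
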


\begin{proof} (i) Assume that $\C^{(G, \Gamma)}$ is a finite tensor category.
Since the forgetful functor $\C^{(G, \Gamma)} \to \C$ is a dominant tensor
functor, then $\C$ is a finite tensor category and therefore so is $\D$. 

Assume, on the other direction, that $\D$ is a finite tensor category. By
Corollary \ref{d-finite-fusion}, $\C$ is a finite tensor category.   By
construction $\C^{(G, \Gamma)} \cong \C^T$, where $T$ is a faithful exact
$k$-linear Hopf monad on $\C$. Then it follows from \cite[Lemma
3.5]{modcat-monads} that $\C^{(G, \Gamma)}$ is a finite tensor category as
well.

\medbreak (ii) Assume that $\D$ is a fusion category and the characteristic of
$k$ does not divide the order of $G$. By Corollary \ref{d-finite-fusion}, $\C$
is also a fusion category. In addition $k^G$ is a cosemisimple Hopf algebra
and therefore $\Rep G = \corep k^G$ is a fusion
category too.    It follows from \cite[Corollary 4.16]{tensor-exact} that
$\C^{(G,
\Gamma)}$ is a fusion category.  

Conversely, assume that $\C^{(G, \Gamma)}$ is a fusion category. Then, by part
(i), $\C$ is a finite tensor category. Consider the forgetful functor $F:
\C^{(G, \Gamma)} \to \C$. Since $F$ is a dominant tensor functor, then 
it maps projective objects of $\C^{(G, \Gamma)}$ to projective objects of $\C$,
by \cite[Theorem 2.5]{EO}. Since $F(\uno) \cong \uno$, and $\uno$ is a
projective object of $\C^{(G, \Gamma)}$, then $\uno$ is a projective object of 
$\C$ and hence $\C$ is a fusion category. Therefore so is its tensor subcategory
$\D$.

In this case, it follows from \cite[Proposition 4.10]{tensor-exact} that
$$\FPdim \C^{(G,
\Gamma)} = \FPdim(\Rep G) \, \FPdim \C = |G| |\Supp \C| \FPdim \D,$$ the last
equality because $\C$ is faithfully graded by $\Supp \C$ with neutral component
$\D$; see \cite[Proposition 8.20]{ENO}. \end{proof}

\begin{proposition}\label{equiv-triv} Let $\C$ be a $(G, \Gamma)$-crossed tensor
category. Then the following statements are equivalent:
\begin{itemize}\item[(i)] The exact sequence \eqref{sec} is an
equivariantization exact sequence.
\item[(ii)] The action $\rhd : \Supp \C \times G \toto G$ is trivial.
\item[(iii)] The action $\lhd : \Supp \C \times G \toto \Supp \C$ is by group
automorphisms.
\end{itemize}
\end{proposition}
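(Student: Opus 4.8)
The plan is to prove the chain of equivalences (i) $\Leftrightarrow$ (ii) $\Leftrightarrow$ (iii) by treating the two "easy" links separately. The equivalence (ii) $\Leftrightarrow$ (iii) is essentially a statement about matched pairs restricted to the subgroup $\Supp \C \subseteq \Gamma$, so I would first observe that $\Supp \C$ is a subgroup stable under $\lhd$, and that together with $\rhd$ restricted to $\Supp \C \times G$ it forms a matched pair $(G, \Supp \C)$; then the already-recorded equivalence of conditions (i$'$) and (ii$'$) in Subsection \ref{mpair} applied to this matched pair gives (ii) $\Leftrightarrow$ (iii) immediately. The remaining work is therefore the equivalence of (i) with (ii).

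For (ii) $\Rightarrow$ (i): assuming $\rhd$ is trivial on $\Supp \C \times G$, I would check that the natural isomorphisms $\gamma^g_{X,Y}\colon \rho^g(X\otimes Y)\to \rho^{t\rhd g}(X)\otimes \rho^g(Y) = \rho^g(X)\otimes\rho^g(Y)$ (for $Y\in\C_t$, $t\in\Supp\C$) endow each autoequivalence $\rho^g$ with the structure of a \emph{monoidal} functor, because conditions (a)--(e) of Definition \ref{crossed-action} become precisely the coherence axioms \eqref{rro-2}--\eqref{rro-3} for a monoidal functor together with compatibility of $\rho_2^{g,h}$ and $\rho_0$ with the monoidal structures. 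Here one uses that $\gamma^g$ is only defined on homogeneous $Y$ and that all relevant homogeneous components sit in $\Supp\C$, so $t\rhd g = g$ throughout. Once $\rho$ is an action of $G$ by \emph{tensor} autoequivalences, the Hopf monad $T = \bigoplus_{g\in G}\rho^g$ and its comonoidal structure $T_2, T_0$ coincide with the ones from the Remark in Subsection \ref{equiv-ab}, so $\C^{(G,\Gamma)} = \C^T = \C^G$ with the equivariantization tensor product \eqref{tens-equiv}, and \eqref{sec} is the equivariantization exact sequence.

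For (i) $\Rightarrow$ (ii): if \eqref{sec} is an equivariantization exact sequence, then by definition the action of $G$ on $\C$ underlying it is by tensor autoequivalences, and the associated Hopf monad is the cocommutative normal Hopf monad $T^\rho = \bigoplus_{g}\rho^g$ of that action. Since the tensor category $\C^{(G,\Gamma)}$ and the forgetful functor $F$ determine the Hopf monad $T$ on $\C$ up to isomorphism (via $T \cong R F$ for $R$ the right adjoint of $F$, or directly from the classification in \cite[Theorem 5.8]{tensor-exact}), the comonoidal structure $T_2$ must agree with the one coming from the equivariantization, which is supported "diagonally" — that is, $T_2$ sends $\rho^g(X\otimes Y)$ into $\rho^g(X)\otimes\rho^g(Y)$. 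Comparing with the componentwise description of $T_2$ in the proof of Theorem \ref{tens-prod}, whose $(g,s\rhd g)$-component is $\gamma^g_{X,Y}$ landing in $\rho^{s\rhd g}(X)\otimes\rho^g(Y)$, and using the faithfulness of the $\Gamma$-grading on $\Supp\C$ together with faithfulness of $\otimes$, forces $s\rhd g = g$ for all $s\in\Supp\C$, $g\in G$; that is, $\rhd$ is trivial on $\Supp\C\times G$.

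I expect the main obstacle to be the implication (i) $\Rightarrow$ (ii): pinning down precisely in what sense ``$\C^{(G,\Gamma)}$ is an equivariantization exact sequence'' determines the comonoidal (not merely the monad) structure of $T$, so that one may legitimately compare the diagonal $T_2$ of a genuine equivariantization with the $\gamma$-twisted $T_2$ built in Theorem \ref{tens-prod}. This requires either invoking the classification of exact sequences by Hopf monads from \cite[Theorem 5.8]{tensor-exact} to see that the Hopf monad (with its full comonoidal structure) is recovered from the exact sequence, or arguing more directly that an equivariantization tensor product on $\C^G$ forces the half-braiding-type data to be diagonal. The other steps — the matched-pair computation for (ii) $\Leftrightarrow$ (iii) and the verification that triviality of $\rhd$ turns $\gamma$ into a monoidal structure on each $\rho^g$ — are routine diagram chases using the axioms of Definition \ref{crossed-action} and the identities \eqref{matched}.
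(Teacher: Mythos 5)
Your handling of (ii) $\Leftrightarrow$ (iii) and of (ii) $\Rightarrow$ (i) matches the paper's proof: restrict the matched pair to the $\lhd$-stable subgroup $\Supp\C$ and quote the equivalence of conditions (i') and (ii') from Subsection \ref{mpair}; and, when $\rhd$ is trivial, read conditions (a)--(e) of Definition \ref{crossed-action} as saying that $\gamma^g$, $\gamma^g_0$ form a monoidal structure on each $\rho^g$ and that $\rho_2^{g,h}$, $\rho_0$ are monoidal isomorphisms, so that the tensor product of Theorem \ref{tens-prod} collapses to the equivariantization tensor product \eqref{tens-equiv}. These two links are fine as you present them.

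The gap is in (i) $\Rightarrow$ (ii), and it is the one you yourself flag. Knowing that \eqref{sec} is an equivariantization exact sequence gives you the Hopf monad $T=\bigoplus_{g}\rho^g$ only up to isomorphism of Hopf monads with the Hopf monad of \emph{some} action by tensor autoequivalences; such an isomorphism has no reason to respect the direct-sum decomposition indexed by $G$, so you cannot directly compare the ``diagonal'' $T_2$ of a genuine equivariantization with the $\gamma$-twisted $T_2$ of Theorem \ref{tens-prod} component by component, and your appeal to $T\cong RF$ or to \cite[Theorem 5.8]{tensor-exact} does not by itself repair this. The paper circumvents the problem by passing to an isomorphism-invariant reformulation: by \cite[Theorem 5.21]{tensor-exact}, the sequence is an equivariantization sequence if and only if the normal Hopf monad $T$ is \emph{cocommutative}, meaning that $(\id_{T(X)}\otimes f)\,T_2(X,\uno)=(f\otimes\id_{T(X)})\,T_2(\uno,X)$ for every morphism $f\colon T(\uno)\to\uno$ and every object $X$. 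This condition makes no reference to the decomposition of $T$ and is therefore inherited from the equivariantization hypothesis. One then tests it on components: take $0\neq X\in\C_s$ with $s\in\Supp\C$, restrict both sides to the summand $\rho^g(X)\subseteq T(X)$, and choose $f=\gamma^g_0\pi_g$, where $\pi_g$ is the projection of $T(\uno)$ onto $\rho^g(\uno)$. By condition (b) of Definition \ref{crossed-action} the left-hand side is $\id_{\rho^g(X)}$, while the right-hand side factors through $f\vert_{\rho^{s\rhd g}(\uno)}$, which is zero whenever $s\rhd g\neq g$; since $\id_{\rho^g(X)}\neq 0$, this forces $s\rhd g=g$. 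This is exactly the ``direct argument that the half-braiding-type data must be diagonal'' you anticipated needing; without it, the comparison of the two comonoidal structures in your sketch is not justified.
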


In Subsection \ref{g-crossed} we shall further discuss $(G, \Gamma)$-crossed
tensor categories satisfying the equivalent conditions in this proposition.

\begin{proof} Since $\Supp \C$ is a $G$-stable subgroup of $\Gamma$, then
$(\Supp \C, G)$ is a matched pair by restriction. 
As pointed out in Subsection \ref{mpair}, the action $\rhd$ is trivial if and
only if $\lhd$ is an action by group automorphisms. Then (ii) and (iii) are
equivalent.

\medbreak Suppose that the action $\rhd : \Gamma \times G \toto G$ is trivial.
Conditions (a) and (b) imply that, for all $g \in G$,
$\rho^g$ is a tensor functor with tensor structure determined by $\gamma_0^g$
and the isomorphisms $\gamma^g$ in \eqref{gamma}. Moreover, condition
(c) becomes in this case
\begin{equation}((\rho_2^{g, h})_X \otimes (\rho_2^{g, h})_Y) \,
\gamma^g_{\rho^{h}(X), \rho^h(Y)} \, \rho^g(\gamma^h_{X, Y}) =
\gamma^{hg}_{X, Y} \, (\rho_2^{g, h})_{X\otimes Y},
\end{equation}  for all $g, h \in G$ and for all $Y \in \C$. Combining this with
condition (d), we obtain that  $\rho_2^{g, h}: \rho^g \rho^h \to
\rho^{hg}$ are isomorphisms of tensor functors. Therefore $\rho$ is an action by
tensor autoequivalences. Furthermore, the definition of tensor product in
Theorem \ref{tens-prod} reduces in this case to the usual tensor product
\eqref{tens-equiv} in the equivariantization $\C^G$. Hence (ii) implies (i).

\medbreak Suppose that the exact sequence $\Rep G \toto \C^{(G, \Gamma)}
\toto \C$ is an equivariantization exact sequence. Then, by  \cite[Theorem
5.21]{tensor-exact},
the normal Hopf monad $T = \bigoplus_{g \in G}\rho^g$ is cocommutative, that is,
for every morphism $f: T(\uno) \to \uno$ and for every object $X \in \C$, we
have
\begin{equation}\label{T-coconm}(\id_{T(X)} \otimes f) \, T_2(X, \uno) = (f
\otimes \id_{T(X)}) \, T_2(\uno, X): T(X) \to T(X).\end{equation}
Let $s \in \Gamma$, $g \in G$. Restricting both morphisms of \eqref{T-coconm} to
$\rho^g(X) \subseteq T(X)$, $X \in
\C_s$, we get the commutativity of the following diagram:
\begin{equation*}\xymatrix{
\rho^g(X) \ar[d]_{\gamma^g_{\uno, X}}\ar[r]^{\gamma^g_{X, \uno}\qquad } &
\rho^g(X)
\otimes \rho^g(\uno) \ar[d]^{\id \otimes f\vert_{\rho^g(\uno)}}\\
\rho^{s \rhd g}(\uno) \otimes \rho^g(X) \ar[r]_{\qquad \qquad
f\vert_{\rho^{s\rhd
g}(\uno)} \otimes \id \qquad } & \rho^g(X), }
\end{equation*}
for all $g \in G$ and for all morphisms $f: T(\uno) \to \uno$. 

We may apply this to the morphism $f = \gamma^g_0\pi_g$, where $\pi_g$ is the
canonical projection $\pi_g: T(\uno) = \bigoplus_{h \in G}\rho^h(\uno) \to
\rho^g(\uno)$. If $s\rhd g \neq g$, then $f\vert_{\rho^{s\rhd g}(\uno)} =
0$. 

On the other hand, $(\id \otimes f) \, \gamma^g_{X, \uno} = \id_{\rho^g(X)}:
\rho^g(X) \to \rho^g(X)$, by condition (b).

Hence, if $s \in \Supp\C$, we may choose $0 \neq X \in \C_s$, and thus we obtain
$s \rhd g = g$. This shows that (i) implies (ii) and finishes the proof of the
proposition.
\end{proof}

Observe that if $(G, \Gamma)$ is any matched pair, where $\Gamma = \Zz_2$ is the
cyclic group of order 2, then the action $\lhd: \Gamma \times G \to \Gamma$ is
necessarily trivial. As a consequence of Proposition \ref{equiv-triv} we obtain
the following:

\begin{corollary} Let $\C$ be a $(G, \Gamma)$-crossed tensor category, where
$\Gamma \cong \Zz_2$. Then the exact sequence $\Rep G \toto \C^{(G, \Gamma)}
\toto \C$ is an equivariantization exact sequence. \qed
\end{corollary}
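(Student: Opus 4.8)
The plan is to deduce the corollary directly from Proposition \ref{equiv-triv}, by checking its condition (iii). So the whole task reduces to verifying that, when $\Gamma \cong \Zz_2$, the action $\lhd : \Supp \C \times G \to \Supp \C$ is by group automorphisms; in fact I will establish the sharper statement, already announced in the observation preceding the corollary, that the full action $\lhd : \Gamma \times G \to \Gamma$ is trivial. Once that is done, the equivalence (i) $\Leftrightarrow$ (iii) of Proposition \ref{equiv-triv} finishes the argument with no further work.

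To see that $\lhd$ is trivial I would argue as follows. Since $(G, \Gamma)$ is a matched pair, $\lhd$ is an action by permutations, so for each fixed $g \in G$ the assignment $s \mapsto s \lhd g$ is a permutation of $\Gamma$. By the relations \eqref{matched} (recorded in Subsection \ref{mpair}) we have $e \lhd g = e$, so this permutation fixes the neutral element. But $\Gamma \cong \Zz_2$ is a two-element set, and the only permutation of a two-element set fixing one of its points is the identity. Hence $s \lhd g = s$ for all $s \in \Gamma$ and all $g \in G$; that is, $\lhd$ is trivial.

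Restricting to $\Supp \C$ (a $G$-stable subgroup of $\Gamma$, as noted after Theorem \ref{exact-sequence}), the action $\lhd : \Supp \C \times G \to \Supp \C$ is therefore also trivial, and in particular, for every $g \in G$, the map $s \mapsto s \lhd g$ is the identity automorphism of the group $\Supp \C$. Thus condition (iii) of Proposition \ref{equiv-triv} holds, and by that proposition so does condition (i): the exact sequence \eqref{sec}, namely $\Rep G \toto \C^{(G, \Gamma)} \toto \C$, is an equivariantization exact sequence, which is exactly the claim. I do not expect any genuine obstacle here, since all the substance is already contained in Proposition \ref{equiv-triv}; the only points requiring a modicum of care are the elementary remark that a fixed-point-preserving permutation of a two-element set is the identity, and the (trivial but worth stating) observation that a trivial action is in particular an action by group automorphisms, so that condition (iii) is literally satisfied.
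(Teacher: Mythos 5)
Your proposal is correct and follows exactly the paper's intended argument: the paper derives the corollary from the observation that for $\Gamma \cong \Zz_2$ the action $\lhd$ is necessarily trivial (hence by group automorphisms on $\Supp \C$), and then invokes Proposition \ref{equiv-triv}. Your filling in of the detail --- that $e \lhd g = e$ forces the fixed-point-preserving permutation of the two-element set $\Gamma$ to be the identity --- is precisely the justification the paper leaves implicit.
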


Suppose that $\tilde \Gamma$ is a subgroup of $\Gamma$. Then the subcategory
$\C_{\tilde \Gamma} = \bigoplus_{s \in \tilde \Gamma}\C_s$ is a tensor
subcategory of $\C$.

\begin{proposition}\label{g-stable} Let $\tilde \Gamma$ be a subgroup of
$\Gamma$ stable under the action $\lhd$ of $G$. 
Then the actions $\rhd$ and $\lhd$ induce by restriction a matched pair  $(G,
\tilde
\Gamma)$.   The category $\C_{\tilde \Gamma}$ is a $(G, \tilde
\Gamma)$-crossed tensor category by restriction and there is a strict 
embedding of tensor categories $\C_{\tilde \Gamma}^{(G, \tilde\Gamma)} \to
\C^{(G, \Gamma)}$. \end{proposition}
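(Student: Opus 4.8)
The plan is to check that all the structural data of the $(G,\Gamma)$-crossed action on $\C$ restrict to $\C_{\tilde\Gamma}$ and then invoke functoriality of the construction $\C\mapsto\C^{(G,\Gamma)}$ together with a direct comparison of the two equivariantizations. First I would verify that $(G,\tilde\Gamma)$ is a matched pair: this is immediate since $\tilde\Gamma\subseteq\Gamma$ is $\lhd$-stable, so $\lhd$ restricts to $\tilde\Gamma\times G\to\tilde\Gamma$, and then relations \eqref{matched}, being identities in $\Gamma$ and $G$, hold a fortiori on $\tilde\Gamma$; one should note $s\rhd g$ still makes sense for $s\in\tilde\Gamma$ because $\rhd$ is defined on all of $\Gamma\times G$. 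Next I would observe that $\C_{\tilde\Gamma}=\bigoplus_{s\in\tilde\Gamma}\C_s$ is a $\tilde\Gamma$-graded tensor subcategory (already noted in the excerpt), that each $\rho^g$ preserves $\C_{\tilde\Gamma}$ by \eqref{rho-partial} and $\lhd$-stability, hence $\rho$ restricts to a right $G$-action on $\C_{\tilde\Gamma}$, and that the natural isomorphisms $\gamma^g_{X,Y}$ and $\gamma^g_0$ restrict since all objects involved stay in $\C_{\tilde\Gamma}$. The compatibility diagrams (a)--(e) of Definition \ref{crossed-action} for the restricted data are just special cases of the originals, so they commute automatically. This gives the $(G,\tilde\Gamma)$-crossed action on $\C_{\tilde\Gamma}$.

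For the embedding, I would compare the associated Hopf monads. Write $T=\bigoplus_{g\in G}\rho^g$ on $\C$ and $T'=\bigoplus_{g\in G}\rho^g|_{\C_{\tilde\Gamma}}$ on $\C_{\tilde\Gamma}$; then $T'$ is the restriction of $T$ to the tensor subcategory $\C_{\tilde\Gamma}$, compatibly with comonoidal structures, units, and multiplications, because $\C_{\tilde\Gamma}$ is $T$-stable and all the structure maps $T_2,T_0,\mu,\eta$ of $T$ take values inside $\C_{\tilde\Gamma}$ when evaluated on objects of $\C_{\tilde\Gamma}$. Consequently a $T'$-module $(X,r)$ in $\C_{\tilde\Gamma}$ is in particular a $T$-module in $\C$ (the module axioms $rT(r)=r\mu_X$, $r\eta_X=\id_X$ are the same equations), and a $T'$-module morphism is a $T$-module morphism. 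This yields a $k$-linear functor $\iota\colon\C_{\tilde\Gamma}^{(G,\tilde\Gamma)}=\C_{\tilde\Gamma}^{T'}\to\C^{T}=\C^{(G,\Gamma)}$, fully faithful because $\C_{\tilde\Gamma}\hookrightarrow\C$ is fully faithful and the module-morphism condition is inherited. It is strict monoidal: the tensor product \eqref{tp-ct} in both categories is computed from $\otimes$ and $T_2$, which agree on $\C_{\tilde\Gamma}$, and the unit objects $(\uno,(\gamma^g_0)_g)$ coincide since $\uno\in\C_e\subseteq\C_{\tilde\Gamma}$. Since the forgetful functor $\C_{\tilde\Gamma}^{(G,\tilde\Gamma)}\to\C_{\tilde\Gamma}$ is a tensor functor and $\C_{\tilde\Gamma}\to\C$ is a tensor embedding, $\iota$ is exact and rigid-preserving, hence a tensor functor; being fully faithful it is a strict embedding of tensor categories.

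The only point requiring a moment of care is checking that the reindexing in the definition of $\tilde r^g$ in Theorem \ref{tens-prod} is genuinely intrinsic to $\C_{\tilde\Gamma}$, i.e.\ that when $Y=\bigoplus_{s\in\tilde\Gamma}Y_s$ lies in $\C_{\tilde\Gamma}$, every index $s\rhd g$ and $s\lhd g$ appearing stays in $\tilde\Gamma$ (for $s\lhd g$) or is at worst an element of $G$ (for $s\rhd g$, which only indexes a functor $\rho^{s\rhd g}$ on $\C_{\tilde\Gamma}$, allowed since $\rho^{s\rhd g}(\C_t)=\C_{t\lhd(s\rhd g)}\subseteq\C_{\tilde\Gamma}$ by $\lhd$-stability). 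Granting this, the formula for $\tilde r^g$ computed in $\C_{\tilde\Gamma}^{(G,\tilde\Gamma)}$ literally equals the one computed in $\C^{(G,\Gamma)}$, so $\iota$ respects the tensor structure on the nose. I expect the main obstacle to be purely bookkeeping---tracking that none of the subscripts escape $\tilde\Gamma$ and that the identifications of $T'$ as a sub-Hopf-monad of $T$ are strict rather than merely up to isomorphism---rather than any substantive difficulty.
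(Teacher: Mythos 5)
Your proposal is correct and follows essentially the same route as the paper: the paper's own proof simply asserts that the matched pair, the grading, the action $\rho$, and the isomorphisms $\gamma$ all restrict to $\C_{\tilde\Gamma}$ and that the inclusion $\C_{\tilde\Gamma}\to\C$ induces the strict embedding, which is exactly what you verify in detail via the comparison of the Hopf monads $T'$ and $T\vert_{\C_{\tilde\Gamma}}$. Your bookkeeping remarks (that $s\lhd g$ stays in $\tilde\Gamma$ while $s\rhd g$ need only live in $G$, and that the identification of monads is strict) are precisely the points the paper leaves implicit.
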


\begin{proof}  Since $\tilde \Gamma$ is stable under the action $\lhd$,
it is clear that $(G, \tilde
\Gamma)$ is a matched pair. Condition \eqref{rho-partial} implies that
$\C_{\tilde \Gamma}$ is stable under the action $\rho$.  It is immediate that
the natural $\tilde\Gamma$-grading and the restriction of $\rho$ make 
 $\C_{\tilde \Gamma}$ into a $(G, \tilde \Gamma)$-crossed tensor category.
Finally, the embedding   $\C_{\tilde \Gamma} \to \C$ induces a strict embedding
of tensor categories
 $\C_{\tilde \Gamma}^{(G, \tilde\Gamma)} \to \C^{(G, \Gamma)}$.
\end{proof}

\begin{remark}\label{neutral} It follows from Definition \ref{crossed-action}
that the neutral
homogeneous component $\C_e$ of $\C$ is a $G$-stable tensor subcategory.
Furthermore, the action of $G$ on $\C$ restricts to an action of $G$ on $\C_e$
by tensor autoequivalences. Therefore $F^{-1}(\C_e) \subseteq \C^{(G, \Gamma)}$
is a tensor subcategory containing $\Rep G$, and in fact $F^{-1}(\C_e) \cong
\C_e^G$ is an equivariantization tensor category. \end{remark}

More generally, let $\overline\Gamma \subseteq \Gamma$ be the subgroup
defined as
$$\overline\Gamma = \{ s \in \Gamma\vert \, s \rhd g = g, \, \forall g \in G
\}.$$
Let $\C_{\overline\Gamma}$ be the tensor subcategory of $\C$ corresponding to
the subgroup
$\overline\Gamma$, that is, $\C_{\overline\Gamma} = \bigoplus_{s \in
\overline\Gamma} \C_s$.

It follows from the relations \eqref{matched} that $\overline\Gamma$ is a
$G$-stable subgroup of $\Gamma$. Let  $\C_{\overline\Gamma}^{(G,
\overline\Gamma)}$
be the tensor subcategory in Proposition \ref{g-stable}. Since $\overline\Gamma$
acts trivially on $G$,  Proposition \ref{equiv-triv} gives us:

\begin{corollary}\label{equiv-sub}  Then the induced exact sequence $\Rep G
\toto \C_{\overline\Gamma}^{(G,
\overline\Gamma)} \toto \C_{\overline\Gamma}$ is an equivariantization exact
sequence. \qed
\end{corollary}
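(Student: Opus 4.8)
The plan is to obtain this as an immediate application of Proposition \ref{equiv-triv}, but applied not to $\C$ itself but to the $(G, \overline\Gamma)$-crossed tensor category $\C_{\overline\Gamma}$. First I would recall the two facts already recorded above: that the relations \eqref{matched} force $\overline\Gamma$ to be a subgroup of $\Gamma$ stable under the action $\lhd$ (if $s\in\overline\Gamma$ and $h\in G$, feeding $s\rhd hg=(s\rhd h)((s\lhd h)\rhd g)$ with $s\rhd hg=hg$ and $s\rhd h=h$ yields $(s\lhd h)\rhd g=g$ for all $g$, so $s\lhd h\in\overline\Gamma$), and that, consequently, Proposition \ref{g-stable} endows $\C_{\overline\Gamma}$ with a $(G,\overline\Gamma)$-crossed structure, the pair $(G,\overline\Gamma)$ being matched by restriction. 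By Theorem \ref{exact-sequence} applied to this crossed action, the sequence $\Rep G\toto\C_{\overline\Gamma}^{(G,\overline\Gamma)}\toto\C_{\overline\Gamma}$ is precisely the perfect exact sequence \eqref{sec} attached to $\C_{\overline\Gamma}$.

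It then remains only to check hypothesis (ii) of Proposition \ref{equiv-triv} for the crossed action on $\C_{\overline\Gamma}$, that is, that the action $\rhd\colon\Supp\C_{\overline\Gamma}\times G\toto G$ is trivial. Here I would observe that $\Supp\C_{\overline\Gamma}=\Supp\C\cap\overline\Gamma\subseteq\overline\Gamma$, and that by the very definition of $\overline\Gamma$ the action $\rhd$ is trivial on all of $\overline\Gamma$, hence \emph{a fortiori} on the subset $\Supp\C_{\overline\Gamma}$. Thus condition (ii) holds, and Proposition \ref{equiv-triv}, applied to $\C_{\overline\Gamma}$, gives at once that $\Rep G\toto\C_{\overline\Gamma}^{(G,\overline\Gamma)}\toto\C_{\overline\Gamma}$ is an equivariantization exact sequence, which is the claim.

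I do not anticipate any genuine obstacle: all the substance has been packaged into Propositions \ref{g-stable} and \ref{equiv-triv}. The only point requiring a moment's care is that Proposition \ref{equiv-triv} must be invoked for $\C_{\overline\Gamma}$ rather than for $\C$ — whose full support $\Supp\C$ need not be contained in $\overline\Gamma$, so that $\rhd$ need not be trivial on $\Supp\C$ — together with the trivial remark that restricting a trivial action to a subgroup leaves it trivial.
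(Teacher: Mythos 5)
Your proposal matches the paper's own argument: the paper likewise notes that $\overline\Gamma$ is a $G$-stable subgroup, invokes Proposition \ref{g-stable} to endow $\C_{\overline\Gamma}$ with a $(G,\overline\Gamma)$-crossed structure, and then applies Proposition \ref{equiv-triv} to $\C_{\overline\Gamma}$, where condition (ii) holds because $\rhd$ is trivial on $\overline\Gamma$ by definition. Your extra verification that $\overline\Gamma$ is $\lhd$-stable via \eqref{matched} is a correct filling-in of a detail the paper leaves implicit.
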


The following proposition and its corollary are dual to Proposition
\ref{equiv-triv} and Corollary \ref{equiv-sub}.

\begin{proposition}\label{gr-triv} Let $\C$ be a $(G, \Gamma)$-crossed tensor
category. Then the following statements are equivalent:
\begin{itemize}\item[(i)] The category $\C^{(G, \Gamma)}$ admits a
$\Gamma$-grading such that the forgetful functor $F: \C^{(G, \Gamma)} \to \C$ is
a $\Gamma$-graded tensor functor.
\item[(ii)] The action $\lhd : \Supp \C \times G \toto \Supp \C$ is trivial.
\item[(iii)] The action $\rhd : \Supp \C \times G \toto G$ is by group
automorphisms.
\end{itemize}
\end{proposition}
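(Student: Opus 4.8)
The plan is to follow the same three-step scheme as in the proof of Proposition \ref{equiv-triv}. I would first dispose of the equivalence (ii) $\Leftrightarrow$ (iii), which is purely group-theoretic: since $\Supp \C$ is a $G$-stable subgroup of $\Gamma$, the actions $\lhd$ and $\rhd$ restrict to make $(G, \Supp \C)$ into a matched pair, and for this matched pair the triviality of $\lhd$ is equivalent to $\rhd$ being an action by group automorphisms, by the equivalence of conditions (i) and (ii) recorded in Subsection \ref{mpair}.

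Next I would prove (ii) $\Rightarrow$ (i). Assuming $\lhd : \Supp \C \times G \to \Supp \C$ is trivial, define $(\C^{(G, \Gamma)})_s$, for $s \in \Gamma$, to be the full subcategory of $\C^{(G, \Gamma)}$ on the objects $(X, r)$ with $X \in \C_s$. To check this is a $\Gamma$-grading: given $(X, r) \in \C^{(G, \Gamma)}$ with homogeneous decomposition $X \cong \bigoplus_s X_s$, $X_s \in \C_s$, the triviality of $\lhd$ on $\Supp \C$ together with \eqref{rho-partial} gives $\rho^g(X_s) \in \C_{s \lhd g} = \C_s$, so $r^g$ restricts to isomorphisms $\rho^g(X_s) \to X_s$ and $(X_s, (r^g|_{X_s})_g)$ is an object of $\C^{(G, \Gamma)}$ lying in $(\C^{(G, \Gamma)})_s$, exhibiting $(X, r)$ as a direct sum of homogeneous objects; vanishing of Hom spaces between distinct degrees is inherited from the $\Gamma$-grading of $\C$, since morphisms in $\C^{(G, \Gamma)}$ are in particular morphisms in $\C$; and $(\C^{(G, \Gamma)})_s \otimes (\C^{(G, \Gamma)})_t \subseteq (\C^{(G, \Gamma)})_{st}$ holds because the underlying object of the tensor product \eqref{f-tensor} is $X \otimes Y \in \C_s \otimes \C_t \subseteq \C_{st}$. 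By construction $F$ carries $(\C^{(G, \Gamma)})_s$ into $\C_s$, so $F$ is a $\Gamma$-graded tensor functor.

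The key step is (i) $\Rightarrow$ (ii), and here the main tool is the left adjoint $L : \C \to \C^{(G, \Gamma)}$ of the forgetful functor $F$, which exists because $\C^{(G, \Gamma)} = \C^T$ is the category of modules over the Hopf monad $T = \bigoplus_g \rho^g$ (cf. the proof of Theorem \ref{exact-sequence}): explicitly $L(X) = (T(X), \mu_X)$ and $\Hom_{\C^{(G, \Gamma)}}(L(X), N) \cong \Hom_\C(X, F(N))$ naturally in $N$. Suppose $\C^{(G, \Gamma)}$ carries a $\Gamma$-grading for which $F$ is graded, and suppose towards a contradiction that $t := s \lhd g_0 \neq s$ for some $s \in \Supp \C$, $g_0 \in G$. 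Pick a nonzero $X \in \C_s$ and write $L(X) \cong \bigoplus_u L(X)_u$ for the homogeneous decomposition in $\C^{(G, \Gamma)}$. Applying the graded functor $F$ yields a homogeneous decomposition of $F(L(X)) = T(X) = \bigoplus_{g \in G} \rho^g(X)$, where $\rho^g(X) \in \C_{s \lhd g}$ by \eqref{rho-partial}; hence $F(L(X)_t) = \bigoplus_{g :\, s \lhd g = t} \rho^g(X)$ contains the nonzero summand $\rho^{g_0}(X)$, so $L(X)_t \neq 0$. The canonical projection $L(X) \to L(X)_t$ is then a nonzero morphism in $\C^{(G, \Gamma)}$, so by the adjunction isomorphism $\Hom_\C(X, F(L(X)_t)) \neq 0$. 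But $X \in \C_s$ and $F(L(X)_t) \in \C_t$ with $t \neq s$, so this Hom space vanishes by the $\Gamma$-grading of $\C$, a contradiction. Therefore $s \lhd g = s$ for all $s \in \Supp \C$, $g \in G$, which is (ii).

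The only step that requires an actual idea is (i) $\Rightarrow$ (ii); the remaining implications are bookkeeping with the definitions. The crux of that step is to feed a nontrivial homogeneous component $L(X)_t$ of a free module back through the adjunction $L \dashv F$ to manufacture a nonzero morphism between objects of $\C$ sitting in distinct homogeneous degrees, which is impossible by the rigidity of the grading on $\C$.
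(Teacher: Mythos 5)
Your proof is correct. The equivalence (ii) $\Leftrightarrow$ (iii) and the implication (ii) $\Rightarrow$ (i) coincide with the paper's argument: the former is the restriction of the dichotomy from Subsection \ref{mpair} to the matched pair $(G, \Supp\C)$, and the latter is the same construction of the grading $(\C^{(G,\Gamma)})_s = \{(X,r) : X \in \C_s\}$ with the same verifications. Where you genuinely diverge is (i) $\Rightarrow$ (ii). The paper argues directly: given $s \in \Supp\C$, dominance of $F$ together with Lemma \ref{f-gr} produces an object $(X,r) \in \C^{(G,\Gamma)}_s$ with $0 \neq X \in \C_s$, and then the mere existence of the isomorphisms $r^g : \rho^g(X) \to X$ combined with $\rho^g(X) \in \C_{s \lhd g}$ forces $s \lhd g = s$, since a nonzero object cannot be isomorphic to one of a different degree. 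You instead run the argument through the free module $L(X) = (T(X), \mu_X)$ and the adjunction $L \dashv F$, extracting a nonzero morphism $X \to F(L(X)_t)$ between homogeneous objects of distinct degrees. Both arguments are valid; yours trades the appeal to dominance and Lemma \ref{f-gr} for the explicit description of the left adjoint and a uniqueness-of-decomposition step, and is somewhat longer, while the paper's observation that an equivariant structure on a nonzero homogeneous object pins its degree to a fixed point of $\lhd$ is more economical and is reused in the remark following the proposition.
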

 
\begin{proof} The equivalence of (ii) and (iii) follows from relations
\eqref{matched}. We shall show that (i) is equivalent to (ii). 

Assume (ii). For every $s \in \Gamma$, let $\C^{(G, \Gamma)}_s$ denote the full
subcategory of $\C^{(G, \Gamma)}$ of all  objects $(X, r) \in \C^{(G,
\Gamma)}$ such that $X \in \C_s$. If  $(X, r) \in \C^{(G, \Gamma)}_s$ and $(X',
r') \in \C^{(G, \Gamma)}_t$, $s, t \in \Gamma$, then $(X, r) \otimes (X', r') =
(X \otimes X', r'')$ is an object of $\C^{(G, \Gamma)}_{st}$, because $\C_s
\otimes \C_t \subseteq \C_{st}$.  In addition, if $s \neq t$, then $\Hom_\C(X,
X') = 0$ and therefore we obtain $\Hom_{\C^{(G, \Gamma)}}((X, r), (X', r')) =
0$.

Let now $(X, r)$ be any object of $\C^{(G, \Gamma)}$. 
Then, for all $g \in G$, $r^g:\rho^g(X) \to X$ is an isomorphism in $\C$. We
have a decomposition $X \cong \bigoplus_{s \in \Gamma}X_s$, where $X_s \in
\C_s$, for
all $s \in \Gamma$. In view of condition \eqref{rho-partial}, $r^g$ induces by
restriction an isomorphism
$r^g_s:\rho^g(X_s) \to X_{s}$,  for all $g \in G$, $s\in \Gamma$, because the
action $\lhd$ of $G$ on $\Supp \C$ is trivial by assumption. Moreover,  $(X_s,
r_s)$ is an object of $\C^{(G, \Gamma)}$, where $r_s = \{ r^g_s\}_{g \in G}$ is
the restriction of $r$ to $X_s$, and thus $(X, r) \cong \bigoplus_{s \in \Gamma}
(X_s, r_s)$ is a decomposition of $(X, r)$ into a direct sum of objects $(X_s,
r_s) \in \C^{(G, \Gamma)}_s$.  This shows that $\C^{(G, \Gamma)} = \bigoplus_{s
\in \Gamma}\C^{(G, \Gamma)}_s$ is a $\Gamma$-grading in $\C^{(G, \Gamma)}$.
Moreover, for all $(X, r) \in \C^{(G, \Gamma)}_s$, $s \in \Gamma$, we have $F(X,
r) = X \in \C_s$, that is, the functor $F$ is a $\Gamma$-graded tensor functor.
Then we get (i).

\medbreak Conversely, assume that (i) holds. Let $s \in \Supp \C$ and let $0\neq
Y \in \C_s$. Since $F$ is a dominant $\Gamma$-graded tensor functor, there
exists $(X, r) \in \C^{(G, \Gamma)}_s$ such that $Y \subseteq F(X, r) = X$ and
$X \in \C_s$ (see Lemma \ref{f-gr}). In particular, $X \neq 0$ and for all $g
\in G$, $r^g:\rho^g(X) \to X$ is an isomorphism in $\C$. It follows from
condition \eqref{rho-partial} that $s \lhd g = s$, for all $g \in G$. Since $s
\in \Supp \C$  was arbitrary, we get (ii). This shows that  (i) and (ii) are
equivalent and finishes the proof of the proposition. \end{proof}

\begin{remark} The proof of (i) $\Rightarrow$ (ii) in Proposition \ref{gr-triv}
shows that in fact, if $(X, r)$ is an object of $\C^{(G, \Gamma)}$ such that $X$
is a nonzero homogeneous object of $\C$, then the homogeneous degree of $X$ is a
fixed point of $\Gamma$ under the action of $G$. 
\end{remark}

\begin{remark}\label{e-comp} Suppose that the action $\lhd : \Supp \C \times G
\toto \Supp \C$ is trivial. Consider the $\Gamma$-grading of $\C^{G, \Gamma}$
given by Proposition \ref{gr-triv}. Observe that the neutral component $\C^{G,
\Gamma}_e$ of this grading is the category $F^{-1}(\C_e)$. Therefore $\C^{G,
\Gamma}_e \cong \D^G$ is an equivariantization tensor category with respect to
the restriction of the action $\rho$ to the tensor subcategory $\D =\C_e$. See
Remark \ref{neutral}.

Consider the trivial $\Gamma$-grading on $\Rep G$. Let us also quote that, in
this context, the induced exact sequence 
$\Rep G \toto \C^{(G,\Gamma)} \toto \C$
is a \emph{$\Gamma$-graded exact sequence}, that is, both tensor functors
involved are $\Gamma$-graded tensor functors.
\end{remark}

Let $\underline\Gamma \subseteq \Gamma$ be the set of fixed points of $\Gamma$
under the action of $G$. Then $\underline\Gamma$ is a $G$-stable subgroup of
$\Gamma$. Let  $\C_{\underline \Gamma}^{G, \underline\Gamma}$ be the  tensor
subcategory of $\C^{G, \Gamma}$ given by Proposition \ref{g-stable}. Since $G$
acts trivially on the subgroup $\underline\Gamma$, Proposition \ref{gr-triv}
implies the following (c.f. Remark \ref{e-comp}):

\begin{corollary}\label{gr-sub}  The tensor subcategory $\C_{\underline
\Gamma}^{G, \underline\Gamma}$ is a $\underline\Gamma$-graded tensor category
with neutral component $\D^G$, and with respect to the trivial
$\underline\Gamma$-grading on $\Rep G$, the induced exact sequence $\Rep G \toto
\C_{\underline\Gamma}^{(G,
\underline\Gamma)} \toto \C_{\underline\Gamma}$
is a $\underline\Gamma$-graded exact sequence. \qed
\end{corollary}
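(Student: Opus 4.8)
The plan is to obtain the statement by specializing Propositions \ref{g-stable} and \ref{gr-triv}, together with Remarks \ref{neutral} and \ref{e-comp}, to the $G$-stable subgroup $\underline\Gamma \subseteq \Gamma$. First I would apply Proposition \ref{g-stable} with $\tilde\Gamma = \underline\Gamma$: since $\underline\Gamma$ is a $G$-stable subgroup of $\Gamma$, the actions $\rhd$ and $\lhd$ restrict to a matched pair $(G, \underline\Gamma)$, the tensor subcategory $\C_{\underline\Gamma} = \bigoplus_{s \in \underline\Gamma}\C_s$ becomes a $(G, \underline\Gamma)$-crossed tensor category in a canonical way, and $\C_{\underline\Gamma}^{(G, \underline\Gamma)}$ is (strictly) a tensor subcategory of $\C^{(G,\Gamma)}$.

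Next I would check the hypothesis of Proposition \ref{gr-triv} for the $(G, \underline\Gamma)$-crossed tensor category $\C_{\underline\Gamma}$. By construction $\Supp \C_{\underline\Gamma} \subseteq \underline\Gamma$, and every element of $\underline\Gamma$ is by definition a fixed point of the action $\lhd$ of $G$; hence the restricted action $\lhd : \Supp\C_{\underline\Gamma} \times G \to \Supp\C_{\underline\Gamma}$ is trivial. This is condition (ii) of Proposition \ref{gr-triv} (applied to $\C_{\underline\Gamma}$), so condition (i) holds as well: $\C_{\underline\Gamma}^{(G,\underline\Gamma)}$ admits a $\underline\Gamma$-grading, with homogeneous components $\{(X,r) : X \in \C_s\}$, $s \in \underline\Gamma$, for which the forgetful functor $F : \C_{\underline\Gamma}^{(G,\underline\Gamma)} \to \C_{\underline\Gamma}$ is a $\underline\Gamma$-graded tensor functor. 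Its neutral component is $F^{-1}(\C_e)$; since $e \in \underline\Gamma$ so that $\C_e = \D$, and the action of $G$ restricts to an action on $\D$ by tensor autoequivalences, Remark \ref{neutral} (equivalently Remark \ref{e-comp}) identifies this neutral component with the equivariantization $\D^G$. This gives the first assertion.

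It remains to see that the exact sequence of Theorem \ref{exact-sequence} for $\C_{\underline\Gamma}$, namely $\Rep G \toto \C_{\underline\Gamma}^{(G,\underline\Gamma)} \overset{F}\toto \C_{\underline\Gamma}$, is $\underline\Gamma$-graded when $\Rep G$ is equipped with the trivial $\underline\Gamma$-grading (everything in degree $e$). The functor $F$ is $\underline\Gamma$-graded by the previous step, so the only thing left is the full embedding $\Rep G \hookrightarrow \C_{\underline\Gamma}^{(G,\underline\Gamma)}$, whose essential image is $\KER_F$. An object of $\KER_F$ is a pair $(X, r)$ with $X$ a trivial object of $\C_{\underline\Gamma}$, and trivial objects lie in $\C_e$ by Lemma \ref{uno-dual}(i); hence $(X,r)$ lies in the neutral component $F^{-1}(\C_e)$, so the embedding lands in degree $e$ and is $\underline\Gamma$-graded. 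Thus the sequence is a $\underline\Gamma$-graded exact sequence, as claimed. I expect no serious obstacle here: the whole argument is a direct application of the cited results, the single point deserving a line of justification being that $\KER_F$ sits inside the neutral component, which is immediate because trivial objects are homogeneous of degree $e$.
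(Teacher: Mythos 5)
Your proposal is correct and follows exactly the paper's intended route: the corollary is stated with a \qed precisely because it is the specialization of Proposition \ref{g-stable} and Proposition \ref{gr-triv} (via Remark \ref{e-comp}) to the $G$-stable subgroup $\underline\Gamma$ of fixed points, on which $\lhd$ acts trivially by definition. Your extra observation that $\KER_F$ lies in the neutral component because trivial objects are homogeneous of degree $e$ is the same point already recorded in Remark \ref{e-comp}, so nothing is missing.
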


\begin{remark} Suppose that the neutral component $\D =\C_e$ of $\C$ is a fusion
category. Then $\C^{(G, \Gamma)}$ is also a fusion category, by Proposition
\ref{finite-fusion}. Corollaries \ref{equiv-sub} and \ref{gr-sub} imply that the
fusion subcategories $\C_{\overline\Gamma}^{(G, \overline\Gamma)}$ and
$\C_{\underline\Gamma}^{(G, \underline\Gamma)}$ are, respectively, a
$G$-equivariantization of a group extension of $\D$ and a group extension of a
$G$-equivariantization  of $\D$. In particular, it follows from
\cite[Proposition 4.1]{ENO2} that if $\D$ is weakly group-theoretical, then so
are the fusion subcategories $\C_{\overline\Gamma}^{(G, \overline\Gamma)}$ and
$\C_{\underline\Gamma}^{(G, \underline\Gamma)}$. \end{remark}

\section{$(G, \Gamma)$-crossed braidings}\label{braidings}

Let $(G, \Gamma)$ be a matched pair of finite groups and let $\C$ be $(G,
\Gamma)$-crossed tensor category. We keep the notation in Section
\ref{main-const}.

\begin{definition}\label{crossed-braiding} A \emph{$(G, \Gamma)$-crossed
braiding} in $\C$ is a triple $(c, \vphi, \psi)$, where 

\begin{itemize}
 \item $\vphi, \psi:\Gamma \to G$ are group homomorphisms, satisfying the
following conditions, for all $s, t \in \Gamma$, $g \in G$:
\begin{align} \label{i}& (t^{-1} \lhd \vphi(s^{-1})) st = s \lhd \psi(t),\\
\label{ii}& (t \rhd \psi(s))^{-1} = \psi(s^{-1}\! \lhd \vphi(t^{-1})),\\
\label{iii}& (t^{-1} \rhd \vphi(s^{-1})))^{-1} = \vphi(s\lhd \psi(t)), \\
\label{iv}& \psi(t) g = (t \rhd g) \psi(t \lhd g), \\
\label{v}& g \vphi(s \lhd g)^{-1} = \vphi(s^{-1}) (s \rhd g),
\end{align}

\item $c$ is a collection of natural isomorphisms 
\begin{equation}\label{crossed-bdg}c_{X, Y}: X \otimes Y \to \rho^{t^{-1} \rhd
\vphi(s^{-1})}(Y) \otimes \rho^{\psi(t)}(X), \qquad X \in \C_s, \, Y \in \C_t,
\end{equation}
\end{itemize}

\medbreak For every $s, t \in \Gamma$, let $s \lt t$ and $t \rt s$ be the
elements of $\Gamma$ defined, respectively, by
$$s \lt t = t^{-1} \rhd \vphi(s^{-1}), \qquad t \rt s = s^{-1} \lhd
\vphi(t^{-1}).$$

The isomorphisms $c_{X, Y}$ are subject to the commutativity of the
following diagrams  (when there is no
ambiguity, we omit subscripts to denote morphisms):

\medbreak
\noindent (1) For all $g \in G$, $s, t \in \Gamma$, $X \in \C_s$, $Y \in \C_t$, 


\begin{equation*}\xymatrix{
\rho^g(X\otimes Y)  \ar[d]_{\rho^g(c)} \ar[r]^{\gamma^g \qquad \qquad} &
\rho^{t\rhd g}(X)  \otimes \rho^g(Y) \ar[d]^{c} \\
\rho^g\!(\rho^{s\lt t}\!(Y) \otimes
\rho^{\psi(t)}\!(X))\ar[d]_{\gamma^g} & \rho^{(s\lhd
(t\rhd g))\lt(t\lhd g)}\!\rho^g\!(\!Y\!) \otimes \rho^{\psi(t\lhd
g)}\!\rho^{t\rhd g}\!(\!X\!)\ar[d]^{\rho_2
\otimes \rho_2} \\
\rho^{(s \lhd \psi(t))\rhd g}\rho^{s\lt t}(Y) \otimes
\rho^g\rho^{\psi(t)}(X) \ar[r]_{\;\rho_2 \otimes \rho_2}  & \rho^{(s\lt t)
((s\lhd \psi(t))\rhd g)}(Y) \otimes \rho^{\psi(t)g}(X)
}\end{equation*}

\medbreak
\noindent (2) For all $s, t, u \in \Gamma$, $X \in \C_s$, $Y \in \C_t$, $Z \in
\C_u$,


\begin{equation*}\xymatrix{
X\otimes Y \otimes Z \ar[dd]_{\id \otimes c} \ar[r]^{c_{X \otimes Y, Z} \qquad
\qquad} & \rho^{st \lt u}(Z)  \otimes \rho^{\psi(u)}(X \otimes
Y)
\ar[d]^{\id\otimes \gamma^{\psi(u)}} \\
& \rho^{st \lt u}(Z) \otimes \rho^{t\rhd\psi(u)}(X) \otimes
\rho^{\psi(u)}(Y) \\
X \otimes \rho^{t \lt u}\!(Z) \otimes \rho^{\psi(u)}\!(Y)
\ar[r]_{c
\otimes \id \qquad \qquad\quad }  & 
\rho^{s \lt (t\rt u)^{-1}}\!\rho^{t \lt u}\!(Z) \otimes
\rho^{\psi(t \rt u)^{-1}}\!(X) \otimes \rho^{\psi(u)}\!(Y)
\ar[u]_{\rho_2\otimes \id \otimes \id} }\end{equation*}

\medbreak \noindent (3) For all $s, t, u \in \Gamma$, $X \in \C_s$, $Y \in
\C_t$, $Z \in \C_u$,


\begin{equation*}\xymatrix{
X\otimes Y \otimes Z \ar[dd]_{c \otimes \id} \ar[r]^{c_{X,
Y\otimes Z} \qquad \qquad} & \rho^{s \lt tu}(Y \otimes Z)
\otimes
\rho^{\psi(tu)}(X) \ar[d]^{\gamma^{(tu)^{-1} \rhd s} \otimes \id} \\
& \rho^{s \lt t}(Y) \otimes
\rho^{s \lt tu}(Z)
\otimes \rho^{\psi(tu)}(X) \\
\rho^{s \lt t}(Y) \otimes \rho^{\psi(t)}(X) \otimes Z
\ar[r]_{\id
\otimes c\qquad \qquad }  & 
\rho^{s \lt t}(Y) \otimes \rho^{(s \lhd
\psi(t)) \lt u}(Z) \otimes \rho^{\psi(u)}\rho^{\psi(t)}(X)
\ar[u]_{\id \otimes \id \otimes \rho_2} }\end{equation*}

\end{definition}

\begin{remark} Let $\C$ be a $(G, \Gamma)$-crossed tensor category
and let $\vphi$ and $\psi: \Gamma \to G$ be  maps. Assume in addition that the
$\Gamma$-grading on $\C$ is faithful and the $G$-action is faithful. Conditions
(1), (2) and (3) in Definition
\ref{crossed-braiding} on the natural isomorphism $c$  imply that the maps
$\vphi$ and $\psi$ are group homomorphisms and that they satisfy the relations
\eqref{i}--\eqref{v}.
This can be shown  with an argument similar
to that in Remark \ref{suff-mp}.

For instance, the relations \eqref{matched} imply that $(t^{-1}\lhd g)^{-1} =
(t\lhd (t^{-1}\rhd g))$, for all $t \in \Gamma$, $g \in G$. The existence of an
isomorphism like in \eqref{crossed-bdg} makes it necessary that condition
\eqref{i} in Definition \ref{crossed-braiding} holds, when the $\Gamma$-grading
on $\C$ is faithful.
\end{remark}

\begin{remark} Let $\C$ be $(G, \Gamma)$-crossed tensor
category and suppose  $(c, \vphi, \psi)$ is a $(G, \Gamma)$-braiding in $\C$.  
Conditions (2) and (3) in Definition \ref{crossed-braiding} imply that the
neutral homogeneous component $\D = \C_e$ of $\C$ is a braided tensor category
with braiding
induced by the restriction of the natural isomorphism $c$.
 \end{remark}
 
\subsection{Crossed braidings and the set-theoretical QYBE}\label{set-YBE}

Let $(G, \Gamma)$ be a matched pair of groups and let $G \Join \Gamma$ be the
associated group   (see Subsection \ref{mpair}). We shall identify $G$ and
$\Gamma$ with
subgroups of $G \Join \Gamma = G \times \Gamma$ in the natural way.
Thus $G \Join \Gamma$ is endowed with an exact factorization into its subgroups
$G$ and $\Gamma$.

\medbreak 
The exact factorization in $G \Join \Gamma$ induces actions of $G$ and $\Gamma$
on
each other, denoted $^sg$, $s^g$, $^gs$, $g^s$, $s \in \Gamma$, $g \in G$, which
are uniquely determined by the relations 
\begin{equation}\label{ac-lyz}
s g = {}^s\!g  \, s^g, \qquad g s = {}^g\!s \, g^s,
\end{equation}
in $G \Join \Gamma$. See \cite[Section 2]{lyz}.

From the definition of the group $G \Join \Gamma$, we obtain the following
relations:
\begin{equation*}  ^sg = s \rhd g, \qquad s^g = s \lhd g,\qquad 
 ^gs = (s^{-1}\lhd g^{-1})^{-1}, \qquad g^s =  (s^{-1} \rhd g^{-1})^{-1},
\end{equation*}
for all $g \in     G$, $s\in \Gamma$.

\medbreak Let $\vphi, \psi: \Gamma \to G$ be group
homomorphisms.   
The conditions \eqref{i}--\eqref{v} in Definition \ref{crossed-braiding} are
equivalent, respectively, to the following conditions:
\begin{align} \label{8}& st = {}^{\psi(s)}\!t \, s^{\vphi(t)},\\
\label{6}& \psi(s)^{t} = \psi(s^{\vphi(t)}),\\
\label{7}& {}^{s}\!\vphi(t) = \vphi({}^{\psi(s)}\!t), \\
\label{9}& \psi({}^gt) g^t = \psi(t) g, \\
\label{10}& \vphi({}^gs)g^s = g \vphi(s),
\end{align}
for all $s, t \in \Gamma$, $g \in G$.  Compare with \cite[Proposition 1]{lyz}.  
 
An alternative formulation for the conditions on the data $(G, \Gamma, \vphi,
\psi)$, in terms of group actions by
automorphisms and 1-cocycles, is explained in \cite[Theorem 2]{lyz}.

\begin{remark}\label{b-inv} 
Consider the map $b_{\vphi, \psi}: \Gamma \times \Gamma
\to \Gamma \times \Gamma$, given by
$$b_{\vphi, \psi} (s, t) = ((t^{-1} \lhd \vphi(s^{-1}))^{-1}, s \lhd \psi(t)),
\quad
s, t \in \Gamma.$$ 
In terms of the actions \eqref{ac-lyz}, this map has the following expression:
\begin{equation*}b_{\vphi, \psi}  (s, t) = ({}^{\vphi(s)}\!t, s^{\psi(t)}),
\qquad s, t \in \Gamma.
\end{equation*}
It turns out that  $b_{\vphi, \psi} (s, t)$ coincides with the map $\mathcal
R^{-1}(t, s)$, where $\mathcal R: \Gamma \times \Gamma \to
\Gamma \times \Gamma$ is the (invertible) set-theoretical solution of the
 QYBE on the set $\Gamma$ given in \cite{lyz2},
corresponding to the actions ${}^{\vphi(s)}\!t$ and $s^{\psi(t)}$ of $\Gamma$ on
itself. The relevant condition for the result of \cite{lyz2} is  \eqref{8} or,
equivalently,  \eqref{i}.  In particular  the map $b_{\vphi, \psi}$ is
bijective.
\end{remark}

\subsection{Braiding in the category $\C^{(G, \Gamma)}$}

We next show that a $(G, \Gamma)$-crossed braiding in $\C$ induces a
braiding in the associated tensor category $\C^{(G, \Gamma)}$. 

\begin{theorem}\label{crossed-braided} Let $\C$ be $(G, \Gamma)$-crossed tensor
category and let $(c, \vphi, \psi)$ be a $(G, \Gamma)$-braiding in $\C$. Then
$\C^{(G, \Gamma)}$ is a braided tensor category with brading $$c_{(X, r), (Y,
l)}: (X, r) \otimes (Y, l) \to (Y, l) \otimes (X, r),$$ defined componentwise by
the isomorphisms 
\begin{equation}\label{formula-sigma}(l^{t^{-1}\rhd \vphi(s^{-1})} \otimes
r^{\psi(t)}) \, c_{X_s, Y_t}: X_s \otimes Y_t \to Y_{t\lhd (t^{-1}\rhd
\vphi(s^{-1}))} \otimes X_{s\lhd \psi(t)},\end{equation}
where $X = \bigoplus_{s \in \Gamma}X_s$ and $Y =\bigoplus_{t \in \Gamma}Y_t$.
\end{theorem}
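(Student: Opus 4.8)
The natural approach is to realize the purported braiding on $\C^{(G,\Gamma)}=\C^T$ as coming from an $R$-matrix on the Hopf monad $T=\bigoplus_{g\in G}\rho^g$, and then invoke \cite[Theorem 8.5]{bv}. Concretely, the first step is to assemble the family $c_{X,Y}$ (for $X\in\C_s$, $Y\in\C_t$) into a single natural transformation
$$R_{X,Y}\colon X\otimes Y \to T(Y)\otimes T(X),\qquad X,Y\in\C,$$
defined componentwise, for homogeneous $X\in\C_s$, $Y\in\C_t$, as the composite of $c_{X,Y}\colon X\otimes Y\to \rho^{t^{-1}\rhd\vphi(s^{-1})}(Y)\otimes\rho^{\psi(t)}(X)$ with the inclusion of the summand indexed by $(t^{-1}\rhd\vphi(s^{-1}),\psi(t))\in G\times G$ into $T(Y)\otimes T(X)$. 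One must then check that this $R$ is $*$-invertible, that is, that the induced morphisms $R^{\#}_{(X,r),(Y,l)}=(l\otimes r)R_{X,Y}$ are isomorphisms for all $T$-modules; this is exactly formula \eqref{formula-sigma}, and its invertibility follows from Remark \ref{b-inv}: the map $b_{\vphi,\psi}$ on $\Gamma\times\Gamma$ is a bijection (being $\mathcal R^{-1}$ for a set-theoretical solution of the QYBE), so the degrees of the homogeneous pieces are permuted bijectively, whence $R^\#$ is componentwise a composite of isomorphisms and hence an isomorphism. Granting this, $\C^T$ is braided with $\sigma_{(X,r),(Y,l)}=(l\otimes r)R_{X,Y}$, which is precisely the stated braiding on $\C^{(G,\Gamma)}$.

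The substance of the proof is verifying the three axioms \eqref{r1}, \eqref{r2}, \eqref{r3} for $R$ to be an $R$-matrix on the Hopf monad $T$. I expect each of these to reduce, componentwise over the grading, exactly to one of the three hexagon-type diagrams (1), (2), (3) in Definition \ref{crossed-braiding}, together with bookkeeping of the group elements supplied by the matched-pair relations \eqref{matched} and the homomorphism conditions \eqref{i}--\eqref{v}. More precisely: axiom \eqref{r1}, which expresses compatibility of $R$ with the multiplication $\mu$ of $T$ (built from the $\rho_2^{g,h}$) and with the comonoidal structure $T_2$ (built from the $\gamma^g$), should unravel to diagram (1); axiom \eqref{r2}, the interaction of $R$ with $T_2$ on the left tensor factor $X\otimes Y$, should unravel to diagram (2); and axiom \eqref{r3}, the interaction with $T_2$ on the right factor $Y\otimes Z$, should unravel to diagram (3). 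The group-theoretic identities \eqref{i}--\eqref{v} (equivalently \eqref{8}--\eqref{10}) are precisely what is needed to see that the subscripts $t^{-1}\rhd\vphi(s^{-1})$, $\psi(t)$, and their iterates appearing on the two sides of each axiom agree; for instance the compatibility of $\psi$ with the action $\rhd$ (condition \eqref{iv}/\eqref{9}) makes the $\rho$-superscripts match up when $R$ is conjugated past $\mu$, and \eqref{i}/\eqref{8} governs the grading degree of $c_{X,Y}$ itself.

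The main obstacle is purely computational bookkeeping: each of \eqref{r1}--\eqref{r3} is an equality of natural transformations between direct sums indexed by $G\times G$ (or $G\times G\times G$ for the associativity-flavored ones), and one must restrict to a fixed homogeneous summand, track how the index tuple transforms under each arrow, and recognize the resulting diagram as (1), (2), or (3) after substituting the matched-pair and cocycle identities. The conceptual content is low but the index manipulation is delicate; the cleanest presentation is to fix homogeneous objects $X\in\C_s$, $Y\in\C_t$ (and $Z\in\C_u$ where needed), fix the summand of $T(X)$, $T(Y)$ being hit, and display the two routes around the relevant $R$-matrix axiom as a commuting diagram whose commutativity is (1), (2) or (3) verbatim. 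One should also note at the outset that naturality of $R$ and the fact that $c$ is defined only on homogeneous components cause no difficulty, since $T$, $\mu$, $\eta$, $T_2$, $T_0$ all respect the $\Gamma$-grading via \eqref{rho-partial} and \eqref{gamma}, so every axiom decomposes as a finite direct sum of its homogeneous instances. Once the three diagrams are matched, \cite[Theorem 8.5]{bv} delivers the braided structure on $\C^T=\C^{(G,\Gamma)}$ and the proof is complete.
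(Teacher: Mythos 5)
Your proposal follows the paper's proof essentially verbatim: both assemble $c$ into an $R$-matrix $R_{X,Y}\colon X\otimes Y\to T(Y)\otimes T(X)$ for the Hopf monad $T=\bigoplus_{g\in G}\rho^g$, match diagrams (1)--(3) of Definition \ref{crossed-braiding} with the quasitriangularity axioms \eqref{r1}--\eqref{r3}, deduce $*$-invertibility from the bijectivity of $b_{\vphi,\psi}$ in Remark \ref{b-inv}, and conclude via \cite[Theorem 8.5]{bv}. The approach and all key steps coincide with the paper's argument.
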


\begin{proof} Recall that $\C^{(G, \Gamma)} = \C^T$, where $T = \bigoplus_{g \in
G}\rho^g$ is the Hopf monad in Theorem \ref{tens-prod}. The natural isomorphisms
$c_{X, Y}: X \otimes Y \to \rho^{t^{-1} \rhd \vphi(s^{-1})}(Y) \otimes
\rho^{\psi(t)}(X)$, $X \in \C_s$, $Y \in \C_t$, induce canonically a natural
transformation 
$$R_{X, Y}: X \otimes Y \to \bigoplus_{g, h \in G}\rho^g(Y)\otimes \rho^h(X).$$
The commutativity of the diagrams (1), (2) and (3) in Definition
\ref{crossed-braided} imply, respectively, that the natural transformation $R$
satisfies conditions \eqref{r1}, \eqref{r2} and \eqref{r3}.

Let $(X, r), (Y, l) \in \C^{(G, \Gamma)}$. Then the natural transformation
$$R^{\#}_{(X, r), (Y, l)} = (l \otimes r) R_{X, Y}: X \otimes Y \to Y \otimes
X,$$
is given componentwise by isomorphisms 
$$(l^{t^{-1}\rhd \vphi(s^{-1})} \otimes r^{\psi(t)}) \, c_{X_s, Y_t}: X_s
\otimes Y_t
\to Y_{t\lhd (t^{-1}\rhd \vphi(s^{-1}))} \otimes X_{s\lhd \psi(t)},$$ 
where $X = \bigoplus_{s \in \Gamma}X_s$ and $Y =\bigoplus_{t \in \Gamma}Y_t$.
Recall that $t\lhd (t^{-1}\rhd \vphi(s^{-1})) = (t^{-1}\lhd
\vphi(s^{-1}))^{-1}$, for all
$s, t \in \Gamma$. It was observed in Remark \ref{b-inv} that the map $b_{\vphi,
\psi}: \Gamma \times \Gamma \to \Gamma \times \Gamma$, defined by 
$b_{\vphi, \psi} (s, t) = ((t^{-1} \lhd \vphi(s^{-1}))^{-1}, s \lhd\psi(t))$, 
is bijective. 
This implies that $R^\#$ is an
isomorphism. 

We have thus shown that $T$ is a quasitriangular Hopf monad on $\C$. Therefore
$\C^{(G, \Gamma)}$ is a braided tensor category with the braiding induced by the
$R$-matrix $R$, which is easily seen to coincide with \eqref{formula-sigma}.
This finishes the proof of the theorem. \end{proof}

\section{Some families of examples}\label{ejemplos}

\subsection{$G$-crossed categories}\label{g-crossed}

Let $G$ be a finite group. Then there is a matched pair $(G, \Gamma)$, where
$\Gamma =G$, $\lhd: G \times \Gamma \to G$ is the trivial action and $\rhd: G
\times \Gamma \to \Gamma$ is the adjoint action.

A $(G, G)$-crossed tensor category is the same as a $G$-graded
tensor category $\C =\bigoplus_{g \in G}\C_g$, endowed with a $G$-action by
tensor autoequivalences $\rho:\underline G^{\op} \to
\underline{\Aut}_{\otimes}(\C)$ such that $\rho^g(\C_h) = \C_{g^{-1}hg}$, for
all $g, h \in G$.

Thus, as a monoidal category, a $(G, G)$-crossed tensor category is a
$G$-crossed  category as defined in \cite[Section 3.2]{TV}. See also
\cite[Chapter VI]{turaev}.

\medbreak In this case the exact sequence of tensor categories given by Theorem
\ref{exact-sequence},
$$\Rep G \to \C^{(G, G)} \to \C,$$  is an
equivariantization exact sequence; see Proposition
\ref{equiv-triv}. 

\begin{remark} Consider a matched pair $(G, \Gamma)$ such that the action $\rhd$
is
trivial or, equivalently, such that the action $\lhd$ is by group automorphisms.
In this context, the notion of $(G, \Gamma)$-crossed fusion category is not new.
In fact, any $(G, \Gamma)$-crossed fusion category associated to such a matched
pair can be recovered from the $G$-crossed categories of \cite{TV}.

This is due to the well-known fact that any action by group automorphisms can be
recovered from an adjoint action, and can be formulated as follows.

\medbreak Suppose that the action $\lhd: \Gamma \times G \to \Gamma$ is by group
automorphisms. Let $S = \Gamma \rtimes G$ be the semidirect product associated
to this action, so that the following relations hold in $S$:
\begin{equation}\label{adj}
g^{-1}sg = s \lhd g, 
\end{equation}
 for all $s \in \Gamma$, $g \in G$.

Consider an $S$-crossed (tensor) category $\C = \bigoplus_{s \in S}\C_s$. Since
$\Gamma$ is a normal subgroup of $S$ then the tensor subcategory $\C_\Gamma =
\bigoplus_{s \in \Gamma}\C_s$ is stable under the adjoint action of $S$.  Hence
it is also stable under the adjoint action of $G$.    Relation \eqref{adj},
together with the conditions defining a $G$-crossed category in \cite[Section
3.2]{TV}, imply that 
$\C_\Gamma$ is a $(G, \Gamma)$-crossed tensor category.

\medbreak    Conversely, suppose that $\C = \bigoplus_{s \in \Gamma}\C_s$ is a
$(G, \Gamma)$-crossed tensor category. Condition \eqref{rho-partial} in
Definition \ref{crossed-action}  implies that 
$$\rho^g(\C_s) = \C_{g^{-1}sg},$$ for all $s \in \Gamma$, $g \in G$, in view of
\eqref{adj}.

The $\Gamma$-grading on $\C$ induces an $S$-grading $\C = \bigoplus_{s \in
S}\C_s$, letting $\C_s : = 0$, for all $s \in S \backslash \Gamma$.

Similarly, the action  $\rho:
\underline{G}^{\op} \to \underline{\Aut}(\C)$ which, by Proposition
\ref{equiv-triv}, is in this case an action by tensor autoequivalences, 
induces an action by tensor autoequivalences $\tilde\rho: \underline{S}^{\op}
\to \underline{\Aut}(\C)$ in the form ${\tilde\rho}^s = \rho^{\overline s}$, 
for all $s \in S$, where $\overline s \in G$ denotes the image of $s$ under the
canonical projection $S \to G$.

The remaining conditions in Definition \ref{crossed-action} imply that $\C$
becomes in this way an $S$-crossed category.
\end{remark}

Recall that a $G$-braiding in a $G$-crossed category $\C$ is a collection of
natural isomorphisms $\alpha_{X, Y}: X \otimes Y \to Y\otimes \rho^t(X)$, $Y \in
\C_t$, called a \emph{$G$-braiding}, satisfying  appropriate compatibility
conditions. See \cite{turaev}, \cite[Subsection 3.3]{TV}.  

\begin{proposition}\label{data-gb} Let $\C$ be a $G$-crossed tensor category.
Then the following data are equivalent:
\begin{enumerate}
\item[(i)] A $G$-braiding $c$ in $\C$. 
\item[(ii)] A $(G, G)$-crossed braiding $(c, \vphi, \psi)$ in $\C$, where 
$\psi = \id_G: G \to G$ and $\vphi: G \to G$ is the trivial group homomorphism.
\end{enumerate}\end{proposition}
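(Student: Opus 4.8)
The plan is to prove Proposition~\ref{data-gb} by direct comparison of definitions, specializing the general notion of a $(G,\Gamma)$-crossed braiding from Definition~\ref{crossed-braiding} to the case $\Gamma = G$, $\lhd$ trivial, $\rhd$ the adjoint action, $\psi = \id_G$ and $\vphi$ trivial. First I would substitute these choices into the five relations \eqref{i}--\eqref{v}. Since $\vphi$ is trivial we have $s \lt t = t^{-1}\rhd\vphi(s^{-1}) = e$ and $t \rt s = s^{-1}\lhd\vphi(t^{-1}) = s^{-1}$ (the latter using triviality of $\lhd$), while $\psi(t) = t$; one checks that \eqref{i} reads $st = s \lhd t$ in this setting, which holds because in the matched pair $(G,G)$ with $\lhd$ trivial the relevant identity collapses to a tautology once one recalls $s\rhd g = sgs^{-1}$ — more precisely, with $\lhd$ trivial, relations \eqref{matched} force $\rhd$ to be by automorphisms, and the equalities \eqref{i}--\eqref{v} become automatic consequences of $\psi=\id$, $\vphi$ trivial and the adjoint-action formula. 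So for this particular matched pair the auxiliary data $(\vphi,\psi)$ satisfy all constraints \emph{automatically}, and the content of a $(G,G)$-crossed braiding reduces to the datum of the natural isomorphism $c$ alone.

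Next I would identify the shape of $c$. Under the above specialization, \eqref{crossed-bdg} becomes
\[
c_{X,Y}\colon X \otimes Y \to \rho^{e}(Y) \otimes \rho^{t}(X) = Y \otimes \rho^t(X), \qquad X \in \C_s,\ Y \in \C_t,
\]
using $\rho^e = \id_\C$ (after identifying along the monoidal isomorphism $\rho_0$), which is exactly the shape $\alpha_{X,Y}\colon X\otimes Y \to Y \otimes \rho^t(X)$ of a $G$-braiding in the sense of \cite{turaev}, \cite[Subsection 3.3]{TV}. So on the level of underlying collections of morphisms, (i) and (ii) carry the same data; what remains is to match the \emph{axioms}. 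The three hexagon-type diagrams (1), (2), (3) of Definition~\ref{crossed-braiding} must be shown to specialize precisely to the compatibility conditions defining a $G$-braiding: the two hexagon axioms relating $c$ to the tensor product, and the compatibility of $c$ with the $G$-action (the naturality/equivariance condition $\rho^g(c_{X,Y}) $ vs.\ $c_{\rho^g X,\rho^g Y}$). Diagram (1), which in the general setting involves the $\gamma^g$ and the action constraints $\rho_2$, should collapse — since $\rho$ is here an action by \emph{tensor} autoequivalences and $\lhd$ is trivial so that $(s\lhd\psi(t))\rhd g = s\rhd g$ etc.\ — to exactly the $G$-equivariance axiom for a $G$-braiding; diagrams (2) and (3) should collapse to the two hexagons.

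The main obstacle I anticipate is purely bookkeeping: carrying out the substitutions in diagrams (1)--(3) and simplifying the exponents of $\rho$ using $\lhd$ trivial, $s\rhd g = sgs^{-1}$, $\psi=\id$, $\vphi$ trivial, then recognizing the resulting diagrams as literally the ones in \cite[Subsection 3.3]{TV}. In particular in diagram (1) one must track the four composites through $\gamma^g$, $\rho^g(c)$, $\gamma^g$, and the pair of $\rho_2$'s, and verify that after simplification the upper-right composite $c\circ\gamma^g$ and the lower-left composite match the equation $c_{\rho^g X, \rho^g Y}\circ\rho^g_2 = (\text{id}\otimes\rho_2)\circ(\rho_2\otimes\text{id})\circ(\ldots)\circ\rho^g(c_{X,Y})$ expressing that $c$ is a morphism of $G$-equivariant structures — this is where sign/ordering errors are most likely. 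I would therefore organize the proof as: (a) observe \eqref{i}--\eqref{v} hold automatically, hence $(\vphi,\psi)$ as prescribed is admissible; (b) observe the shape of $c$ coincides with that of a $G$-braiding via $\rho^e\cong\id_\C$; (c) check that diagrams (2) and (3) become the two $G$-braiding hexagons and diagram (1) becomes the $G$-equivariance axiom, so that the two sets of axioms are literally identical; and conclude that the assignments $c \mapsto (c,\vphi,\psi)$ and $(c,\vphi,\psi)\mapsto c$ are mutually inverse bijections between the two types of data. Step (c) is the only non-formal step and is a routine but somewhat lengthy diagram chase.
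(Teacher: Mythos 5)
Your overall strategy --- specialize Definition \ref{crossed-braiding} to the matched pair underlying a $G$-crossed category, check that the prescribed $(\vphi,\psi)$ satisfies \eqref{i}--\eqref{v}, match the shape of $c$ with that of a $G$-braiding via $\rho^e\cong\id_\C$, and identify diagrams (1)--(3) with the axioms of \cite[Subsection 3.3]{TV} --- is exactly the paper's proof, which consists of precisely this observation (the verification of \eqref{i}--\eqref{v} being stated without proof in the remark preceding it).

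There is, however, a concrete error in your step (a): you have the two actions of the matched pair backwards. You take $\lhd$ trivial and $\rhd$ the adjoint action (in the sense $\lhd:\Gamma\times G\to\Gamma$, $\rhd:\Gamma\times G\to G$ of Definition \ref{crossed-action}). But condition \eqref{rho-partial} for a $G$-crossed category, $\rho^g(\C_h)=\C_{g^{-1}hg}$, forces $h\lhd g=g^{-1}hg$, so it is $\lhd$ that is the adjoint action and $\rhd$ that is trivial; this is also what Proposition \ref{equiv-triv} requires for the asserted equivariantization exact sequence. (The prose at the start of Subsection \ref{g-crossed} writes the symbols the other way around, but the displayed domains $G\times\Gamma\to G$ and $G\times\Gamma\to\Gamma$ there show those actions are the transposes of the ones in Definition \ref{crossed-action}.) Under your convention the verification of \eqref{i} breaks: with $\vphi$ trivial and $\psi=\id_G$, \eqref{i} reads $(t^{-1}\lhd e)\,st=t^{-1}st=s\lhd t$, which with $\lhd$ trivial says $t^{-1}st=s$ --- false for nonabelian $G$; your assertion that it ``collapses to a tautology'' is not a verification. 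With the correct convention ($s\lhd g=g^{-1}sg$, $s\rhd g=g$) all of \eqref{i}--\eqref{v} do hold by direct substitution, every exponent of the form $u\rhd g$ occurring in diagrams (1)--(3) simply equals $g$, and the rest of your argument (the shapes of the morphisms agree, diagram (1) becomes $G$-equivariance of $c$, diagrams (2) and (3) become the two hexagons) goes through as you describe. So the defect is fixable, but as written the key computation in (a) is wrong, and the simplifications you invoke in (c) (e.g.\ $(s\lhd\psi(t))\rhd g=s\rhd g$) are being justified by the wrong mechanism.
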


Note that the trivial homomorphism $\vphi$ and the identity homomorphism
$\psi$ satisfy conditions \eqref{i}--\eqref{v} in Definition
\ref{crossed-braiding}. The map $b_{\vphi, \psi}$ is given in this case by
$$b_{\vphi, \psi}(s, t) = (t, t^{-1}st), \qquad s, t \in G.$$

\begin{proof} It is enough to observe that the commutativity of the diagrams
(1)--(3) in Definition \ref{crossed-braiding} in the case where $\psi$ is the
identity homomorphism and $\vphi: G \to G$ is the trivial group homomorphism, is
equivalent to the commutativity of the diagrams in \cite[Subsection 3.3]{TV}.   
\end{proof}

\begin{remark} Let $\C$ be a $G$-braided tensor category regarded as a $(G,
G)$-crossed tensor category. Suppose  $(c, \vphi, \psi)$ is any $(G,
G)$-braiding in $\C$ where $\psi = \id_G$. It follows from conditions
\eqref{i}--\eqref{v} that $\vphi$ is a group homomorphism $\vphi: G \to Z(G)$. 
\end{remark}

\subsection{Abelian exact sequences of Hopf algebras}\label{subs-ab}

Consider a matched pair of finite groups $(G, \Gamma)$.
Let also $\sigma: G \times G \to (k^*)^\Gamma$ and $\tau: \Gamma \times \Gamma
\to (k^*)^ G$ be  normalized
2-cocycles, that is, using the notation $\sigma_s(g, h) = \sigma(g, h)(s)$ and
$\tau_g(s, t) = \tau(s, t)(g)$, $s, t\in \Gamma$, $g, h \in  G$, the following
relations hold:
\begin{align} &\sigma_{s\vartriangleleft g}(h, l)\sigma_s(g, hl) = \sigma_s(g,
h)\sigma_s(gh, l),\\
& \sigma_s(e, g) = \sigma_s(g, e) = 1,\\
&\tau_g(st, u) \tau_{u\vartriangleright g}(s, t) = \tau_g(t, u)\tau_g(s, tu),\\
&\tau_g(e, s) = \tau_g(s, e) = 1,
\end{align}
for all $g, h, l \in  G$, $s, t, u \in \Gamma$.

\medbreak Assume in addition that $\sigma$ and $\tau$ satisfy the following
compatibility conditions:
\begin{equation}\sigma_{st}(g, h) \tau_{gh}(s, t) =
\sigma_s(t\!\vartriangleright \!g,
(t \!\vartriangleleft \!g) \!\vartriangleright \!h)  \sigma_t(g, h) \tau_g(s, t)
\tau_h(s\!\vartriangleleft \!(t\vartriangleright \!g), t\!\vartriangleleft \!g),
\end{equation}
\begin{equation} \sigma_e(g, h) = \tau_e(s, t) = 1,\end{equation}
for all $s, t \in \Gamma$, $g, h \in  G$.

\medbreak Then the vector space $H = k^\Gamma \otimes k  G$  becomes a
 Hopf algebra with the crossed product algebra structure and crossed
coproduct coalgebra structure, denoted  $H = k^\Gamma \, {}^{\tau}\#_{\sigma}k
G$. The multiplication and comultiplication of $H$ are defined, for all
$g,h\in \Gamma$, $g, h\in  G$, in the form
\begin{align}\label{mult} (e_s \# g)(e_t \# h) & = \delta_{s \vartriangleleft g,
h}\, \sigma_s(g,
h) e_s \# gh, \\
\label{delta} \Delta(e_s \# g) & = \sum_{tu=s} \tau_g(t, u)\, e_t
\# (u \vartriangleright g) \otimes e_u\# g.
\end{align}
It is well-known that $H$ is a semisimple Hopf algebra if and only if the
characteristic of $k$ does not divide the order of $G$.

\medbreak Let $i = \id \otimes u: k^\Gamma \to H$ and $p = \epsilon \otimes \id:
H \to k G$ be the canonical Hopf algebra maps. Then we
have an exact sequence of Hopf algebras 
\begin{equation}\label{abeliana}
k \toto k^\Gamma \overset{i}\toto H
\overset{p}\toto k G \toto k. 
\end{equation}

By \cite[Proposition 3.9]{tensor-exact} this
exact sequence gives rise to an exact sequence of tensor categories
\begin{equation}\label{abelian}\Rep G \overset{p^*}\toto \Rep H
\overset{i^*}\toto \C(\Gamma),\end{equation} where $\C(\Gamma) = \Rep
k^{\Gamma}$ is the
category of finite dimensional $\Gamma$-graded vector spaces.

\medbreak The category $\C(\Gamma)$ is a $(G, \Gamma)$-crossed fusion category
with respect to the following data:
\begin{enumerate}\item[(a)] The $\Gamma$-grading $\C(\Gamma) = \bigoplus_{s \in
\Gamma} \C(\Gamma)_s$, where, for all $s \in \Gamma$, $\C(\Gamma)_s$ is the
category of finite dimensional vector spaces of degree $s$.
\item[(b)] The action $\rho: \underline{G}^{\op} \to
\underline{\Aut}(\C(\Gamma))$ is given by $\rho^g(V) = V$ with $G$-grading
$\rho^g(V)_s = V_{s \lhd g}$.

The monoidal structure of $\rho$ is given by $\rho_0 = \id: \rho^e \to
\id_{\C(\Gamma)}$, and $\rho_2^{g, h} = \sigma(h, g)^{-1}: \rho^g\rho^h(V) \to
\rho^{hg}(V)$, that is, $$\rho_2^{g, h}(v) = \sigma_{|v|}(h, g)^{-1}v, $$ for
every homogeneous element $v \in V$ of degree $|v|$.

\item[(c)] For all $U \in \C(\Gamma)$, $V \in \C(\Gamma)_s$, the natural
isomorphisms $\gamma^g_{U, V}: \rho^g(U \otimes V) \to \rho^{s\rhd g}(U) \otimes
\rho^g(V)$, are given by
$$\gamma^g_{U,V} (u \otimes v) = \tau_g(|u|, s) \, u \otimes v,$$ on homogeneous
elements $u \in U$ of degree $|u|$.

\item[(d)] The isomorphisms $\gamma^g_0: \rho^g(k) = k \to k$ are identities,
for all $g \in G$.
\end{enumerate}

The next theorem relates the tensor category associated to the $(G,
\Gamma)$-crossed tensor category $\C(\Gamma)$ with the Hopf algebra $H$.

\begin{theorem}   There is a strict equivalence of tensor categories
$$\C(\Gamma)^{(G, \Gamma)} \cong \Rep H.$$
 \end{theorem}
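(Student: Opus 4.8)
The plan is to build the equivalence $\C(\Gamma)^{(G,\Gamma)} \cong \Rep H$ directly on objects and morphisms, exploiting that both sides have an explicit description as module categories: $\C(\Gamma)^{(G,\Gamma)} = \C(\Gamma)^T$ for the Hopf monad $T = \bigoplus_{g\in G}\rho^g$ of Theorem \ref{tens-prod}, while $\Rep H$ is the category of modules over the crossed product $H = k^\Gamma\,{}^\tau\#_\sigma kG$. Recall that $H = k^\Gamma \otimes kG$ as a vector space, so an $H$-module $M$ is in particular a $k^\Gamma$-module, i.e. a $\Gamma$-graded vector space $M = \bigoplus_{s\in\Gamma}M_s$ (an object of $\C(\Gamma)$), together with the action of the grouplike-looking elements $e_s\# g$. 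Using the multiplication \eqref{mult}, the element $1\# g = \sum_{s}e_s\# g$ acts on $M$ and, by the $\delta_{s\lhd g,h}$ in \eqref{mult}, sends $M_s$ into $M_{s\lhd g}$; writing $r^g\colon \rho^g(M)\to M$ for the map that on $M_{s\lhd g} = \rho^g(M)_s$ is the action of $1\# g$, the associativity $(1\# g)(1\# h) = \sum_s \sigma_s(g,h)\, e_s\# gh$ translates precisely into the equivariance relation \eqref{deltau} with the twist $\rho_2^{g,h} = \sigma(h,g)^{-1}$ prescribed in (b). So the first step is to make this bijection of data explicit: an $H$-module is the same as an object $(M,r)$ of $\C(\Gamma)^T = \C(\Gamma)^{(G,\Gamma)}$, and an $H$-module map is exactly a morphism in $\C(\Gamma)^{(G,\Gamma)}$ (both conditions say: grading-preserving and commuting with all the $r^g$). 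This gives a $k$-linear isomorphism of categories $\Phi\colon \Rep H \to \C(\Gamma)^{(G,\Gamma)}$.

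The second step is to check that $\Phi$ is strict monoidal. On the $\Rep H$ side, the tensor product of two modules uses the comultiplication \eqref{delta}: $(e_s\# g)$ acts on $m\otimes m'$ by $\Delta(e_s\# g)(m\otimes m') = \sum_{tu=s}\tau_g(t,u)\,(e_t\#(u\rhd g))m \otimes (e_u\# g)m'$. On the $\C(\Gamma)^{(G,\Gamma)}$ side, the tensor product is given by \eqref{f-tensor}: $\tilde r^g$ on the degree-$u$ part of $Y$ is $(r^{u\rhd g}\otimes {r'}^g_u)\,\gamma^g_{X,Y_u}$, and $\gamma^g$ is multiplication by $\tau_g(|x|,u)$ by (c). Matching $t = |x|$ — i.e. the degree of the first tensor factor — with the summand index in $\Delta$, one sees the two formulas coincide term by term; the factor $\tau_g(t,u)$ appears in both, $r^{u\rhd g}$ corresponds to the action of $e_t\#(u\rhd g)$, and $r'^g$ to the action of $e_u\# g$. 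The unit objects also match: the unit of $\C(\Gamma)^{(G,\Gamma)}$ is $(\uno,(\rho^g_\uno)_g)$ with $\rho^g_\uno = \gamma^g_0 = \id$, which under $\Phi^{-1}$ is the trivial $\Gamma$-graded line on which $e_s\# g$ acts by $\delta_{s,e}$ — this is the counit representation $k_\epsilon$ of $H$, the tensor unit of $\Rep H$. So $\Phi$ is a strict tensor functor, hence (being also an isomorphism of the underlying categories) a strict tensor equivalence.

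I expect the bookkeeping in the second step to be the main obstacle: one must be careful that the cocycle $\sigma(h,g)^{-1}$ really is what makes \eqref{deltau} equivalent to associativity in $H$ — the variable order in $\sigma$ and the appearance of $\rho_2^{g,h}$ with arguments $(h,g)$ rather than $(g,h)$ is exactly calibrated so that the monad multiplication $\mu$ restricted componentwise is $\rho_2^{g,h}$, and similarly that the $\tau_g(|u|,s)$ in $\gamma$ is aligned with the summand $\tau_g(t,u)$ in $\Delta$ and not with $\tau_g(u,t)$. An alternative, cleaner route that sidesteps some of this is to invoke the exact sequence machinery: \eqref{abelian} already exhibits $\Rep H$ as fitting into $\Rep G \to \Rep H \to \C(\Gamma)$, and Theorem \ref{exact-sequence} exhibits $\C(\Gamma)^{(G,\Gamma)}$ in $\Rep G \to \C(\Gamma)^{(G,\Gamma)} \to \C(\Gamma)$; by the classification of such exact sequences with fixed induced Hopf algebra $k^G$ in terms of normal faithful right exact Hopf monads on $\C(\Gamma)$ (\cite[Theorem 5.8]{tensor-exact}), it suffices to identify the Hopf monad of $\Rep H \to \C(\Gamma)$ — namely $k^\Gamma \otimes -$ with its induced structure from the crossed product — with $T = \bigoplus_g \rho^g$ equipped with the comonoidal structure from (c), (d). This reduces the whole theorem to a single identification of Hopf monads, but verifying the comonoidal structures agree still amounts to the same cocycle computation, so I would present the direct argument above and remark on the Hopf-monad reformulation.
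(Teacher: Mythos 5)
Your strategy is essentially the paper's: the paper also proves the theorem by exhibiting the explicit correspondence between $H$-modules and equivariant objects and then checking strict compatibility of the tensor structures, except that it outsources the $k$-linear equivalence to \cite[Proposition 3.2]{ext-ty} and declares the monoidal verification straightforward, whereas you carry both steps out by hand (and additionally sketch a reformulation via the Hopf-monad classification of exact sequences, which the paper does not use).

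One concrete point to fix in your dictionary: you set $r^g$ equal to the action of $1\# g$. Because $\rho$ is a \emph{right} action (a monoidal functor on $\underline{G}^{\op}$, with $\rho_2^{g,h}\colon \rho^g\rho^h\to\rho^{hg}$), the equivariance condition \eqref{deltau} requires the composite $r^g\,\rho^g(r^h)$ to correspond to $hg$; with your convention that composite is the action of $(1\# g)(1\# h)$, which by \eqref{mult} is a multiple of $1\# gh$, so \eqref{deltau} fails as written. This is exactly why the paper's functor is $K(W)=(W\vert_{k^\Gamma},\, g^{-1}\vert_W)$ --- the equivariant structure is the action of $g^{-1}$, not of $g$ --- with inverse functor $(e_s\# g).v=(r^g)^{-1}(v_s)$. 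Once that inversion is inserted, the rest of your computation (matching the summand $\tau_g(t,u)$ in $\Delta$ with the $\gamma^g$ of datum (c), and $\sigma$ with $\rho_2^{g,h}=\sigma(h,g)^{-1}$) goes through as you describe and reproduces the paper's argument.
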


\begin{proof} Since $H = k^{\Gamma}\#_{\sigma}kG$ is a crossed product as an
algebra, it
follows from \cite[Proposition 3.2]{ext-ty} that $\rho$ is an action by
$k$-linear autoequivalences and there is an equivalence of  $k$-linear
categories $K: \Rep H \cong \C(\Gamma)^G = \C(\Gamma)^{(G, \Gamma)}$, where for
all $H$-module $W$,  $K(W) = (W\vert_{k^{\Gamma}}, g^{-1}\vert_W)$. The inverse
equivalence maps an object $(V, r)$ of $\C(\Gamma)^{(G, \Gamma)}$ to the vector
space $V$ endowed with the $H$-action $(e_s \# g) . v = (r^g)^{-1}(v_s)$, $v \in
V$.

It is straightforward to verify that $K$ is a strict equivalence of tensor
categories. This proves the theorem. \end{proof}

\begin{remark} Consider the case where the exact sequence \eqref{abeliana} is a
split exact sequence. This corresponds to the situation where $\sigma$ and
$\tau$ are the trivial 2-cocycles.

Regard the category $\C(\Gamma)$ as a $(G, \Gamma)$-crossed tensor category as
above. Suppose $(c, \vphi, \psi)$ is a $(G, \Gamma)$-braiding in $\C(\Gamma)$. 
It follows from \cite[Theorem 1]{lyz} that the compatibility conditions between
$\vphi$ and $\psi$ given in Definition \ref{crossed-braiding} imply that  pairs
$(\vphi, \psi)$, satisfying the compatibility conditions in Definition \ref{crossed-braiding},
are in bijective correspondence with positive 
 quasitriangular structures in the Hopf algebra $H$. In fact, the conditions in
\cite[Theorem 1]{lyz} are equivalent to the conditions \eqref{i}--\eqref{v}, in
view of \cite[Proposition 1]{lyz}.     See  Subsection \ref{set-YBE}.
\end{remark}

\bibliographystyle{amsalpha}

\end{document}